\documentclass[11pt]{amsart}

\usepackage[a4paper,top=2cm,bottom=2cm,left=2cm,right=2cm]{geometry}

\usepackage[T1]{fontenc}
\usepackage{amsmath,amssymb,amsthm}
\usepackage{graphicx}
\usepackage{subcaption}
\usepackage{float}
\usepackage{hyperref}

\newtheorem{lemma}{Lemma}[section]

\newtheorem{theorem}{Theorem}[section]
\newtheorem{proposition}{Proposition}[section]
\newtheorem{corollary}{Corollary}[section]

\newtheorem{remark}{Remark}[section]

\theoremstyle{definition}

\numberwithin{equation}{section}
\title[Driven--Damped $\phi^4$ Equation on Bounded Domains]{Time-Periodic Dynamics of a Driven--Damped $\phi^4$ Equation on Bounded Domains}

\author{Vassilios M Rothos}
\address{School of Mechanical Engineering, Faculty of Engineering\\
Aristotle University of Thessaloniki\\
Thessaloniki 54124, Greece}
\email{rothos@auth.gr}
\thanks{Corresponding author. Email: rothos@auth.gr}
\thanks{The research was funded by Aristotle University of Thessaloniki (AUTH) Research Council grants number 73191, 73699, 11682.}

\subjclass[2020]{35L70, 35L05, 35B10, 35B34, 37L15}

\keywords{time-periodic solutions, damped wave equations, $\phi^4$ model, Lyapunov--Schmidt reduction, infinite-dimensional dynamical systems}

\date{}

\dedicatory{}

\begin{document}

\begin{abstract}
We prove the existence of time-periodic solutions for a driven--damped $\phi^4$
wave equation posed on a bounded spatial domain with periodic boundary conditions.
The result is obtained at the level of the full partial differential equation.
The analysis is based on a Lyapunov--Schmidt decomposition in a time-periodic
function space. A key structural feature is the presence of linear damping,
which ensures uniform invertibility of the associated linear operator on the
complement of the kernel and removes the need for nonresonance conditions.
After introducing a truncation to control the cubic nonlinearity, we solve the
complement equation via a contraction mapping argument and reduce the problem
to a scalar kernel equation. The latter is treated using a mean/zero-mean
decomposition and a fixed-point argument. A period-averaged energy estimate
then allows one to remove the truncation and obtain a time-periodic weak
solution under explicit smallness conditions on the forcing parameters
\end{abstract}

\maketitle

\section{Introduction}
\label{sec0}

Driven and damped nonlinear wave equations arise in a wide range of physical
contexts, including condensed matter physics, nonlinear optics, and classical
field theory \cite{Birnir1994}. Among the simplest and most extensively studied
examples is the $\phi^4$ model, whose double-well potential supports both
homogeneous equilibria and topological kink solutions
\cite{CampbellSchonfeldWingate1983,Manton2021}. In the presence of external
forcing, dissipation, and spatial or spatio-temporal inhomogeneities, the
resulting dynamics becomes genuinely nonintegrable and may exhibit rich
time-dependent behavior, including frequency locking, resonance phenomena, and
time-periodic responses
\cite{GatlikDobrowolskiKevrekidis2024,GatlikDobrowolskiCaputoKevrekidis2026}.
Such effects have been widely observed in numerical simulations and in
collective-coordinate descriptions of driven $\phi^4$ and sine--Gordon systems
\cite{CampbellSchonfeldWingate1983,KivsharMalomed1989,Manton2021}.

\textbf{Related works.}
From a mathematical perspective, the existence of time-periodic solutions for
nonlinear wave equations has been studied using functional-analytic and
operator-theoretic methods
\cite{Pazy1983,LionsMagenes1972}. A particularly effective approach is based on
Lyapunov--Schmidt reduction for second-order evolution equations with periodic
forcing, as developed by Fe\v{c}kan and others
\cite{Feckan1998,HaleRaugel1992,ChueshovLasiecka2010}. This framework relies on
a decomposition with respect to the kernel of a self-adjoint spatial operator,
Fourier analysis in time, and uniform control of the linearized dynamics on the
orthogonal complement, reducing the problem to a finite-dimensional one.

For nonlinear field equations posed on the real line, however, several
fundamental obstacles limit the direct applicability of such methods. The
associated spatial operator typically has continuous spectrum, which prevents a
straightforward Fourier--spectral inversion, while the cubic nonlinearity of the
$\phi^4$ model is not globally Lipschitz on natural energy spaces. In addition,
dispersive effects and radiation mechanisms interact nontrivially with coherent
structures; see, for example,
\cite{SofferWeinstein1999,KowalczykMartelMunoz2017}. As a consequence,
PDE-level existence results for time-periodic solutions in this setting remain
difficult to obtain.

Analytical investigations therefore often rely on reduced descriptions, most
notably collective-coordinate approaches, in which the dynamics is projected
onto a finite set of modes associated with a kink manifold
\cite{KivsharMalomed1989,Manton2021,GatlikDobrowolskiCaputoKevrekidis2026}.
While these methods provide valuable physical insight, they are not, in general,
derived from a fully controlled analysis of the underlying PDE.

In contrast, numerical simulations of driven--damped $\phi^4$ equations are
necessarily carried out on finite spatial intervals, supplemented with boundary
conditions such as periodic or Dirichlet conditions
\cite{GatlikDobrowolskiKevrekidis2024,GatlikDobrowolskiCaputoKevrekidis2026}.
This replaces the continuous spectrum of the real-line problem by a discrete
one. From an analytical viewpoint, this is not merely a numerical artifact but a
structural modification that restores compactness and enables spectral
decompositions unavailable on $\mathbb{R}$. The present work is motivated by the
observation that this bounded-domain setting is precisely the regime in which a
Lyapunov--Schmidt strategy can be implemented at the level of the full PDE.

\textbf{Main results.}
In this paper we consider a driven--damped $\phi^4$ field equation posed on a
bounded spatial domain with periodic boundary conditions, incorporating small
spatial and spatio-temporal modulations. Our main result establishes the
existence of time-periodic solutions of the full partial differential equation
under explicit smallness conditions on the forcing parameters.

The analysis combines operator-theoretic and nonlinear techniques within a
Lyapunov--Schmidt framework. The problem is formulated on a finite periodic
domain, so that the associated elliptic operator has discrete spectrum with a
finite-dimensional kernel. A truncation is introduced to render the cubic
nonlinearity globally Lipschitz, together with explicit Sobolev-based bounds.

A Lyapunov--Schmidt decomposition reduces the problem to a coupled system
consisting of a complement equation and a finite-dimensional kernel equation.
A key feature of the analysis is that linear damping ensures uniform
invertibility of the associated time-periodic linear operator on the orthogonal
complement of the kernel. This eliminates the need for nonresonance conditions
and avoids small-divisor difficulties that arise in conservative settings.

The reduced kernel equation is then solved using a mean/zero-mean decomposition
and a fixed-point argument. Finally, a period-averaged energy identity yields
an a priori estimate that allows one to remove the truncation and obtain a
time-periodic solution of the original equation.

The novelty of the present work lies in the identification of a structural
mechanism combining spectral discreteness on bounded domains with
damping-induced invertibility of the associated time-periodic linear operator.
More precisely:
(i) we obtain an existence result at the level of the full partial differential
equation without imposing nonresonance conditions on the temporal frequency;
(ii) we show that linear damping provides a uniform lower bound on the temporal
Fourier multipliers, thereby eliminating small-divisor obstructions; and
(iii) we carry out a fully controlled Lyapunov--Schmidt reduction in which the
infinite-dimensional dynamics is reduced to a scalar kernel equation.
To the best of our knowledge, a result of this type for driven--damped
$\phi^4$ equations at the PDE level has not previously appeared in the
literature.

\textbf{Organization of the paper.}
The paper is organized as follows.
In Section~\ref{sec1} we introduce the model and the functional setting.
Section~\ref{sec2} establishes global Lipschitz estimates for the truncated
nonlinearity.
In Section~\ref{sec3} we perform the Lyapunov--Schmidt decomposition and prove
invertibility of the linear operator on the complement.
Section~\ref{sec4} solves the complement equation and derives the reduced kernel
equation.
In Section~\ref{sec5} we solve the reduced equation and obtain existence of
time-periodic solutions.
Finally, Sections~\ref{sec6} and \ref{sec_conclusions} discuss implications,
limitations, and possible extensions.
\section{Model and functional setting}
\label{sec1}

\subsection{Driven--damped $\phi^4$ model and periodic formulation}
\label{sec1a}

The driven--damped $\phi^4$ equation considered here serves as a prototypical
continuum model for the dynamics of interfaces and domain walls in spatially
inhomogeneous and externally forced media. Closely related field equations
arise, for instance, in the theory of long Josephson junctions with geometric or
material modulations, as well as in nonlinear waveguides and patterned media,
where spatial variations of effective stiffness or dispersion lead naturally to
position-dependent coefficients in the governing equations. Periodic
modulations of this type may represent engineered geometries, layered
substrates, or externally imposed lattices, and are known to induce effective
pinning, resonant responses, and nontrivial transport phenomena at the level of
kink dynamics.

A substantial body of work has investigated such effects using numerical
simulations of the full field equation or reduced collective-coordinate models,
in which the infinite-dimensional dynamics is projected onto a finite set of
effective degrees of freedom associated with the kink position, width, or
internal modes; see, for example,
\cite{GatlikDobrowolskiCaputoKevrekidis2026,GatlikDobrowolskiKevrekidis2024}
and references therein. While these approaches provide valuable physical
insight, they do not address the existence of time-periodic solutions at the
level of the full partial differential equation. The formulation adopted in
this work bridges this gap by treating the driven--damped $\phi^4$ model
directly as a nonlinear evolution equation on a bounded domain and by
establishing, within a rigorous operator-theoretic framework, the existence of
genuine time-periodic solutions under small spatio-temporal modulations.

We consider the driven--damped $\phi^4$ field equation
\begin{equation}
\partial_t^2 \phi + \eta \partial_t \phi
- \partial_x\!\big(F(x)\partial_x \phi\big)
+ \lambda(t,x)\,\phi(\phi^2-1)
= -\Gamma ,
\qquad (t,x)\in \mathbb{R}\times\mathbb{R},
\label{eq:pde_R}
\end{equation}
with $\eta>0$ (linear damping), constant bias $\Gamma\in\mathbb{R}$, and
modulations
\begin{equation}
F(x)=1+\varepsilon_1\sin(qx),\qquad 
\lambda(t,x)=1+\varepsilon_2\sin(kx-\omega t),
\qquad 0<\varepsilon_1,\varepsilon_2\ll 1.
\label{eq:mods_R}
\end{equation}

On $\mathbb{R}$ one typically focuses on kink dynamics. However, numerical PDE
computations are necessarily carried out on finite spatial intervals with
boundary conditions, which discretize the spatial spectrum and place the
problem within an operator-theoretic framework compatible with
Lyapunov--Schmidt reduction \cite{Feckan1998}. Motivated by this observation,
we formulate and prove Lyapunov–Schmidt framework of Feckan under periodic
boundary conditions on a finite domain. This setting does not represent a
single topological kink on a circle (which is topologically incompatible with
periodic boundary conditions), but it provides a clean PDE-level framework in
which time-periodic responses can be analyzed rigorously, while retaining the
spectral discreteness characteristic of finite-domain simulations.

Fix $L>0$ and let $\mathbb{T}_L:=\mathbb{R}/(L\mathbb{Z})$ be the one-dimensional
torus. Let
\[
T:=\frac{2\pi}{\omega},\qquad \mathbb{S}_T:=\mathbb{R}/(T\mathbb{Z}),
\]
and seek solutions $\phi(t,\cdot)\in H^1(\mathbb{T}_L)$ that are $T$-periodic in
time and periodic in space:
\[
\phi(t+T,x)=\phi(t,x),\qquad 
\phi(t,x+L)=\phi(t,x),\qquad 
\phi_x(t,x+L)=\phi_x(t,x).
\]

We introduce the Hilbert spaces
\[
Y:=L^2(\mathbb{T}_L),\qquad 
X:=H^1(\mathbb{T}_L)\hookrightarrow Y,
\]
endowed with their standard norms $\|\cdot\|_Y$ and $\|\cdot\|_X$. Assuming
$|\varepsilon_1|<1$, we have
\[
F(x)\ge F_0:=1-|\varepsilon_1|>0 \quad \text{for all }x\in\mathbb{T}_L.
\]

Define the self-adjoint operator
\begin{equation}
A u:= -\partial_x\!\big(F(x)u_x\big),\qquad 
\mathrm{dom}(A)=H^2(\mathbb{T}_L).
\label{eq:A_def}
\end{equation}
Then $A$ is nonnegative, self-adjoint on $Y$, and has compact resolvent.
Consequently, it admits an orthonormal eigenbasis $\{u_j\}_{j\ge 0}\subset Y$
with eigenvalues $0=\lambda_0<\lambda_1\le\lambda_2\le\cdots$ and
$\lambda_j\to\infty$.

A key structural feature, crucial for the Lyapunov--Schmidt decomposition, is
the explicit characterization of the kernel:
\begin{equation}
\ker A=\{\text{constants on }\mathbb{T}_L\}
=\mathrm{span}\{w_0\},\qquad 
w_0(x):=L^{-1/2}.
\label{eq:kerA}
\end{equation}
Indeed, $Au=0$ implies $(F u_x)_x=0$ in distributions, so $F u_x\equiv c$;
integrating over one period yields $c=0$, hence $u_x\equiv 0$.

Let $P:Y\to\ker A$ be the orthogonal projector $Pu=\langle u,w_0\rangle w_0$
and set $Q:=I-P$.

With $u:=\phi$, equation \eqref{eq:pde_R} on
$\mathbb{S}_T\times\mathbb{T}_L$ can be written in abstract form as
\begin{equation}
u_{tt}+\eta u_t + A u = f(u,t),
\label{eq:abstract}
\end{equation}
where
\begin{equation}
f(u,t)(x):=-\lambda(t,x)\,u(x)\big(u(x)^2-1\big)-\Gamma.
\label{eq:f_def}
\end{equation}

The cubic growth of the nonlinearity prevents $f(\cdot,t)$ from being globally
Lipschitz as a map $X\to Y$, while the Lyapunov--Schmidt framework of
\cite{Feckan1998} requires such a property in order to apply a contraction
mapping argument on the complement equation. We therefore introduce a standard
cut-off, solve the truncated problem, and subsequently remove the cut-off by
means of an a priori estimate.

The bounded periodic framework introduced above plays a structural role in the
analysis. We briefly contrast it with the unbounded setting.

\subsection{Bounded domain versus the real line}
\label{sec1b}

The analysis is carried out on the bounded periodic domain $\mathbb{T}_L$, where
the associated spatial operator has compact resolvent and purely discrete
spectrum.

In contrast, on the real line $\mathbb{R}$, the linearized operator around
coherent structures typically exhibits continuous spectrum in addition to
discrete eigenvalues, and no spectral gap separates zero from the continuous
spectrum. As a result, invertibility properties analogous to those used in the
Lyapunov--Schmidt reduction generally fail, or require weighted spaces and
substantially different techniques.

Moreover, on $\mathbb{R}$ dispersive effects and radiation mechanisms interact
with internal modes in a fundamentally different way, and such mechanisms are
absent in the bounded periodic setting considered here. For this reason, the
existence result established in the present work should be viewed as genuinely
bounded-domain and does not constitute a direct approximation of the
unbounded problem.
\section{Truncation and nonlinear estimates}
\label{sec2}

In order to apply a Lyapunov--Schmidt reduction to the complement equation, we
require a nonlinearity that is globally Lipschitz as a map from $X$ to $Y$.
The cubic term in \eqref{eq:f_def} does not satisfy this property on all of
$H^1(\mathbb{T}_L)$, so we introduce a standard smooth truncation. Such
truncation procedures are classical in the analysis of nonlinear evolution
equations in energy spaces; see, for instance,
\cite{LionsMagenes1972,Temam1997}.

Let $\chi\in C^\infty(\mathbb{R}_+;[0,1])$ be such that
\[
\chi(r)=1 \quad \text{for } 0\le r\le 1,
\qquad
\chi(r)=0 \quad \text{for } r\ge 2.
\]
For $R>0$, define the truncated nonlinearity by
\begin{equation}
f_R(u,t):=\chi\!\left(\frac{\|u\|_X}{R}\right)\,f(u,t).
\label{eq:cutoff}
\end{equation}
Thus, the cut-off preserves the original nonlinearity on the ball
$\{\|u\|_X\le R\}$ and enforces global Lipschitz continuity outside a slightly
larger ball.

\begin{proposition}[Global Lipschitz property of the truncated nonlinearity]
\label{prop:Lip_fR}
Let $L>0$ and set $\mathbb{T}_L:=\mathbb{R}/(L\mathbb{Z})$.
Let $X:=H^1(\mathbb{T}_L)$ and $Y:=L^2(\mathbb{T}_L)$.
Assume that $\lambda\in L^\infty(\mathbb{S}_T\times\mathbb{T}_L)$ and
$\Gamma\in\mathbb{R}$, and let $f_R$ be defined by \eqref{eq:cutoff}.
Then, for each $t\in\mathbb{S}_T$, the map $f_R(\cdot,t):X\to Y$ is globally
Lipschitz, uniformly in $t$. More precisely, there exists a constant $M_R>0$,
depending only on
\[
R,\ \|\lambda\|_{L^\infty},\ |\Gamma|,\ L,\ \|\chi'\|_{L^\infty},
\]
such that, for all $u_1,u_2\in X$ and all $t\in\mathbb{S}_T$,
\begin{equation}
\|f_R(u_1,t)-f_R(u_2,t)\|_Y \le M_R \|u_1-u_2\|_X.
\label{eq:Lip_fR}
\end{equation}
Moreover, $M_R$ may be chosen explicitly in terms of the Sobolev embedding
constants on $\mathbb{T}_L$.
\end{proposition}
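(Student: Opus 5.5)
The plan is to combine the one-dimensional Sobolev embedding $H^1(\mathbb{T}_L)\hookrightarrow L^\infty(\mathbb{T}_L)$ with a case analysis on the sizes of $\|u_1\|_X$ and $\|u_2\|_X$ relative to the cut-off radius. Let $C_S=C_S(L)$ be a constant with $\|u\|_{L^\infty}\le C_S\|u\|_X$ for all $u\in X$. For instance, writing $u(x)=\bar u+\int (u-\bar u)'$ gives $C_S\le L^{-1/2}+L^{1/2}$. Set $\Lambda:=\|\lambda\|_{L^\infty}$.

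\textbf{Step 1: bounds for the untruncated map.} For $u$ in the ball $\|u\|_X\le\rho$, the inequality $\|u^3\|_Y\le\|u\|_{L^\infty}^2\|u\|_Y$ gives
\[
\|f(u,t)\|_Y\le \Lambda\big(C_S^2\rho^3+\rho\big)+|\Gamma|L^{1/2}=:B_\rho .
\]
For two such functions, factor $u_1^3-u_2^3=(u_1-u_2)(u_1^2+u_1u_2+u_2^2)$. This yields
\[
\|f(u_1,t)-f(u_2,t)\|_Y\le \Lambda\big(3C_S^2\rho^2+1\big)\|u_1-u_2\|_X=:K_\rho\|u_1-u_2\|_X .
\]
The constant $\Gamma$ cancels in the difference. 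All bounds are uniform in $t$, because $\lambda$ enters only through $\Lambda$.

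\textbf{Step 2: case analysis.} Write $\chi_i:=\chi(\|u_i\|_X/R)$. There are three cases.

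\emph{Case (a): both norms exceed $2R$.} Then both terms vanish and there is nothing to prove.

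\emph{Case (b): both norms are at most $2R$.} Split
\[
f_R(u_1)-f_R(u_2)=\chi_1\big(f(u_1)-f(u_2)\big)+(\chi_1-\chi_2)f(u_2).
\]
Since $|\chi_1-\chi_2|\le \|\chi'\|_\infty R^{-1}\big|\|u_1\|_X-\|u_2\|_X\big|\le \|\chi'\|_\infty R^{-1}\|u_1-u_2\|_X$, Step 1 with $\rho=2R$ bounds the difference by $\big(K_{2R}+\|\chi'\|_\infty R^{-1}B_{2R}\big)\|u_1-u_2\|_X$.

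\emph{Case (c): $\|u_1\|_X\le 2R<\|u_2\|_X$.} Here $f_R(u_2)=0$, so only $\chi_1 f(u_1)$ has to be estimated. Let $v$ be the point where the segment $[u_1,u_2]$ meets the sphere $\|v\|_X=2R$. Then $\chi(\|v\|_X/R)=0$, and $\|u_1-v\|_X\le\|u_1-u_2\|_X$. Applying Case (b) to the pair $(u_1,v)$ gives
\[
\|f_R(u_1)\|_Y=\|f_R(u_1)-f_R(v)\|_Y\le \big(K_{2R}+\|\chi'\|_\infty R^{-1}B_{2R}\big)\|u_1-u_2\|_X .
\]
Hence $M_R:=K_{2R}+\|\chi'\|_\infty R^{-1}B_{2R}$ works in all cases. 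It depends only on $R$, $\Lambda$, $|\Gamma|$, $L$ (through $C_S$) and $\|\chi'\|_\infty$, and it is explicit in terms of the Sobolev constant, as the statement requires.

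\textbf{Main obstacle.} The only delicate point is Case (c), where one function lies outside the support of the cut-off and the other inside. The continuity-along-the-segment argument handles it. An alternative is the direct bound $\chi_1\le \|\chi'\|_\infty R^{-1}(\|u_2\|_X-\|u_1\|_X)$, valid because $\chi$ vanishes at $2$. The cubic estimate itself is routine, precisely because $H^1$ embeds into $L^\infty$ in one dimension.
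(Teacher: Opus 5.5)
Your proof is correct and follows the same route as the paper: the split $\chi_1\bigl(f(u_1)-f(u_2)\bigr)+(\chi_1-\chi_2)f(u_2)$, the cubic factorization combined with $H^1(\mathbb{T}_L)\hookrightarrow L^\infty(\mathbb{T}_L)$, and the mean value bound $|\chi_1-\chi_2|\le\|\chi'\|_{L^\infty}R^{-1}\|u_1-u_2\|_X$. Your Case (c) is a needed addition rather than a detour: the paper asserts that if one cut-off factor is nonzero then both $\|u_i\|_X\le 2R$, which is false when $\|u_1\|_X\le 2R<\|u_2\|_X$ (then $f(u_2)$ is unbounded in both terms of the split, even though their sum is just $\chi_1 f(u_1)$), and your segment argument, or the bound $\chi_1\le\|\chi'\|_{L^\infty}R^{-1}(\|u_2\|_X-\|u_1\|_X)$, closes exactly that gap.
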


\begin{proof}
Fix $u_1,u_2\in X$ and $t\in\mathbb{S}_T$. By definition,
\[
f_R(u_1,t)-f_R(u_2,t)
=
\chi\!\left(\frac{\|u_1\|_X}{R}\right)\bigl(f(u_1,t)-f(u_2,t)\bigr)
+
\Bigl[
\chi\!\left(\frac{\|u_1\|_X}{R}\right)
-
\chi\!\left(\frac{\|u_2\|_X}{R}\right)
\Bigr]f(u_2,t).
\]
We estimate the two terms separately.

\medskip

\noindent
\emph{Step 1: Estimate of the difference $f(u_1,t)-f(u_2,t)$.}
Let $g(z)=z(z^2-1)=z^3-z$. Then
\[
f(u,t)(x)=-\lambda(t,x)g(u(x))-\Gamma,
\]
so that
\[
f(u_1,t)-f(u_2,t)
=
-\lambda(t,\cdot)\bigl(g(u_1)-g(u_2)\bigr).
\]
A direct computation shows that
\[
|g(a)-g(b)|
\le (|a|^2+|a||b|+|b|^2+1)|a-b|.
\]
Using the Sobolev embedding $H^1(\mathbb{T}_L)\hookrightarrow L^\infty(\mathbb{T}_L)$,
we obtain
\[
\|f(u_1,t)-f(u_2,t)\|_Y
\le
\|\lambda\|_{L^\infty}(1+3K^2)\|u_1-u_2\|_X,
\]
where
\[
K=C_\infty(L)\max\{\|u_1\|_X,\|u_2\|_X\}.
\]

If at least one cut-off factor is nonzero, then $\|u_i\|_X\le 2R$ and hence
\[
K\le 2C_\infty(L)R.
\]
Therefore,
\[
\|f(u_1,t)-f(u_2,t)\|_Y
\le
C_1(R)\|u_1-u_2\|_X,
\]
for a constant $C_1(R)>0$ depending only on $R$, $\|\lambda\|_{L^\infty}$,
and $L$.

\medskip

\noindent
\emph{Step 2: Estimate of the cut-off variation term.}
By the mean value theorem,
\[
\left|
\chi\!\left(\frac{\|u_1\|_X}{R}\right)
-
\chi\!\left(\frac{\|u_2\|_X}{R}\right)
\right|
\le
\frac{\|\chi'\|_{L^\infty}}{R}\,\|u_1-u_2\|_X.
\]
Thus,
\[
\left\|
\Bigl[
\chi\!\left(\frac{\|u_1\|_X}{R}\right)
-
\chi\!\left(\frac{\|u_2\|_X}{R}\right)
\Bigr]f(u_2,t)
\right\|_Y
\le
\frac{\|\chi'\|_{L^\infty}}{R}\,\|f(u_2,t)\|_Y\,\|u_1-u_2\|_X.
\]

If $\chi(\|u_2\|_X/R)\neq 0$, then $\|u_2\|_X\le 2R$, and by Sobolev
embeddings,
\[
\|f(u_2,t)\|_Y
\le
C_2(R)\bigl(1+|\Gamma|\bigr),
\]
for a constant $C_2(R)>0$ depending only on $R$, $\|\lambda\|_{L^\infty}$,
and $L$.

\medskip

\noindent
\emph{Conclusion.}
Combining the above estimates, we obtain
\[
\|f_R(u_1,t)-f_R(u_2,t)\|_Y
\le
M_R\|u_1-u_2\|_X,
\]
for a suitable constant $M_R>0$ depending only on
\[
R,\ \|\lambda\|_{L^\infty},\ |\Gamma|,\ L,\ \|\chi'\|_{L^\infty},
\]
and the Sobolev embedding constants on $\mathbb{T}_L$.
This proves \eqref{eq:Lip_fR}.
\end{proof}

Similar Lipschitz regularizations are commonly used in the study of semilinear
wave equations in order to enable fixed-point arguments; see, for example,
\cite{Pazy1983}.
\section{Lyapunov--Schmidt decomposition and complement invertibility}
\label{sec3}

Let $L^2(\mathbb{S}_T;Y)$ denote the Bochner space of $T$-periodic $Y$-valued
functions, equipped with the norm
\[
\|v\|_{L^2_T(Y)}^2:=\int_0^T \|v(t)\|_Y^2\,dt.
\]
The temporal Fourier functions
\[
e_n(t):=e^{i2\pi nt/T}, \qquad n\in\mathbb{Z},
\]
form an orthonormal basis of $L^2(\mathbb{S}_T)$, while
$\{u_j\}_{j\ge0}$ is an orthonormal basis of $Y$.
Consequently, the family $\{e_nu_j\}_{n\in\mathbb{Z},\,j\ge0}$ forms an
orthonormal basis of $L^2(\mathbb{S}_T;Y)$.

Throughout this section we work with the truncated problem
\begin{equation}
u_{tt}+\eta u_t + A u = f_R(u,t),
\qquad u(t+T)=u(t),
\label{eq:truncated}
\end{equation}
and postpone removal of the cut-off to the a priori estimates established later.

\subsection{Lyapunov--Schmidt decomposition and reduction strategy}

We decompose the unknown as
\[
u(t)=v(t)+w(t),\qquad
v(t):=Pu(t)\in\ker A,\qquad
w(t):=Qu(t)\in\ker A^\perp,
\]
where $P:Y\to\ker A$ is the orthogonal projector onto the kernel of $A$, and
$Q:=I-P$ is the complementary projector.
Since $\ker A$ is one-dimensional and spanned by the normalized constant
function $w_0(x)=L^{-1/2}$, there exists a scalar function
$x:\mathbb{S}_T\to\mathbb{R}$ such that
\begin{equation}
v(t)=x(t)\,w_0.
\label{eq:v_representation}
\end{equation}
Moreover,
\[
x(t)=\langle u(t),w_0\rangle_{L^2(\mathbb{T}_L)}
=\frac{1}{\sqrt{L}}\int_{\mathbb{T}_L}u(t,x)\,dx,
\]
so $x(t)$ is precisely the spatial average of the solution. It describes the
component of the dynamics along the neutral zero eigenmode of the spatial
operator $A$.

Applying the projections $P$ and $Q$ to \eqref{eq:truncated}, and using the
identities $PA=0$ and $QA=AQ$ on $\mathrm{dom}(A)$, we obtain the coupled system
\begin{align}
v_{tt}(t)+\eta v_t(t)
&= P f_R\bigl(v(t)+w(t),t\bigr),
\label{eq:P_eq_detailed}\\
w_{tt}(t)+\eta w_t(t)+A w(t)
&= Q f_R\bigl(v(t)+w(t),t\bigr).
\label{eq:Q_eq_detailed}
\end{align}
Equation \eqref{eq:P_eq_detailed} governs the kernel component, whereas
\eqref{eq:Q_eq_detailed} governs the dynamics on the orthogonal complement.

Following the Lyapunov--Schmidt strategy of Fe\v{c}kan \cite{Feckan1998}, we
treat \eqref{eq:Q_eq_detailed} as a linear time-periodic equation for $w$, with
a nonlinear right-hand side depending parametrically on $v$. To this end, we
introduce the linear operator
\[
\mathcal{L}w:=w_{tt}+\eta w_t+Aw,
\]
acting on $T$-periodic functions with values in $\ker A^\perp$.
As shown below, $\mathcal{L}$ is invertible on this space and its inverse is
uniformly bounded. Consequently, \eqref{eq:Q_eq_detailed} can be rewritten in
fixed-point form as
\begin{equation}
w=\mathcal{L}^{-1}Q f_R(v+w,t).
\label{eq:Q_fixed_point}
\end{equation}

Since the truncated nonlinearity $f_R$ is globally Lipschitz as a map
$X\to Y$, the right-hand side of \eqref{eq:Q_fixed_point} is a contraction on a
suitable $T$-periodic function space provided the Lipschitz constant $M_R$ is
sufficiently small relative to the inverse bound for $\mathcal{L}$. Under this
condition, the contraction mapping principle yields, for each prescribed
$T$-periodic kernel component $v$, a unique $T$-periodic complement component
$w=w(v)$ solving \eqref{eq:Q_eq_detailed}. Moreover, the dependence of $w$ on
$v$ is Lipschitz continuous.

Substituting the resulting complement component $w(v)$ into the kernel equation
\eqref{eq:P_eq_detailed} reduces the infinite-dimensional PDE to a
finite-dimensional equation for the scalar function $x(t)$ in
\eqref{eq:v_representation}. This reduced equation captures the dynamics along
the neutral mode, while the higher modes are slaved to it through the map
$v\mapsto w(v)$.

Once a $T$-periodic solution $x(t)$ of the reduced kernel equation has been
constructed, the corresponding $T$-periodic solution of the truncated PDE is
recovered as
\[
u(t)=x(t)w_0+w(x(t)w_0).
\]
The final step is then to show that this solution remains in the region where
the cut-off is inactive. This allows the truncation to be removed and yields a
genuine $T$-periodic solution of the original equation.

\subsection{Invertibility estimate on the complement}

We now study the linear operator
\begin{equation}
\mathcal{L}w:=w_{tt}+\eta w_t+Aw,
\label{eq:L_def_repeat}
\end{equation}
acting on $T$-periodic functions $w:\mathbb{S}_T\to Y$ with values in
$\ker A^\perp\subset Y$.

Since $A$ is self-adjoint with compact resolvent on $Y=L^2(\mathbb{T}_L)$, it
admits a complete orthonormal basis of eigenfunctions
$\{u_j\}_{j\ge0}$ with eigenvalues
\[
0=\lambda_0<\lambda_1\le\lambda_2\le\cdots,
\qquad \lambda_j\to\infty,
\]
and $\ker A=\mathrm{span}\{u_0\}$.
Thus the restriction $w\in\ker A^\perp$ means precisely that only modes with
$j\ge1$ appear in the expansion of $w$.

Expanding $w$ in the time--space Fourier basis
$\{e_n(t)u_j(x)\}_{n\in\mathbb{Z},\,j\ge1}$, where
$e_n(t)=\exp(i2\pi nt/T)$, a direct computation shows that each such mode is an
eigenfunction of $\mathcal{L}$:
\[
\mathcal{L}(e_nu_j)
=
\left(
-\left(\frac{2\pi n}{T}\right)^2
+i\eta\frac{2\pi n}{T}
+\lambda_j
\right)e_nu_j
=:\Delta_{n,j}\,e_nu_j.
\]
Hence $\mathcal{L}$ is diagonal in this basis, and invertibility reduces to a
uniform lower bound on the complex multipliers $\Delta_{n,j}$. Such spectral
decompositions are classical in the analysis of nonlinear wave equations with
discrete spectrum; see also \cite{KapitulaPromislow2013}.

In the undamped case $\eta=0$, one encounters the usual small-divisor problem,
since the real quantities $\lambda_j-(2\pi n/T)^2$ may become arbitrarily small
unless one imposes a nonresonance condition, as in \cite{Feckan1998}. In the
present damped setting, however, the imaginary part introduced by the term
$\eta w_t$ regularizes all nonzero temporal harmonics.

\begin{lemma}[Uniform lower bound for the damped multipliers]
\label{lem:Delta_bound}
Assume $\eta>0$. Then for every $j\ge1$ and every $n\in\mathbb{Z}$,
\begin{equation}
|\Delta_{n,j}|\ge
\begin{cases}
\lambda_1, & n=0,\\[1ex]
\displaystyle \eta\,\frac{2\pi|n|}{T}, & n\neq0.
\end{cases}
\label{eq:Delta_bound_detailed}
\end{equation}
Consequently,
\begin{equation}
c_\eta:=\min\!\left\{\lambda_1,\frac{2\pi\eta}{T}\right\}>0,
\qquad
\inf_{j\ge1,\;n\in\mathbb{Z}}|\Delta_{n,j}|\ge c_\eta .
\label{eq:ceta}
\end{equation}
In particular,
\[
\|\mathcal{L}^{-1}\|\le c_\eta^{-1}.
\]
\end{lemma}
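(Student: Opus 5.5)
The plan is to read off both bounds from the explicit formula for the multipliers. Set $\nu_n:=2\pi n/T$, so that
\[
\Delta_{n,j}=\bigl(\lambda_j-\nu_n^2\bigr)+i\,\eta\,\nu_n ,
\]
and use the elementary inequality $|z|\ge\max\{|\mathrm{Re}\,z|,|\mathrm{Im}\,z|\}$. I will split into two cases according to whether $n$ vanishes, since only the nonzero temporal harmonics pick up an imaginary part from the damping term.

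\emph{Case $n=0$.} The multiplier is real and equals $\Delta_{0,j}=\lambda_j$. By the ordering of the spectrum of $A$ and the fact that $\ker A=\mathrm{span}\{u_0\}$ (see \eqref{eq:kerA}), we have $\lambda_j\ge\lambda_1>0$ for every $j\ge1$. Hence $|\Delta_{0,j}|\ge\lambda_1$.

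\emph{Case $n\neq0$.} The real part $\lambda_j-\nu_n^2$ can be arbitrarily small; this is exactly the small-divisor issue of the undamped problem. I will therefore discard it entirely and bound $|\Delta_{n,j}|$ by its imaginary part:
\[
|\Delta_{n,j}|\ge \eta|\nu_n|=\eta\,\frac{2\pi|n|}{T}\ge \frac{2\pi\eta}{T}.
\]
This is the point where $\eta>0$ is essential, and I expect it to be the only step that is not pure bookkeeping. It is still immediate once one decides to ignore the real part. Combining the two cases gives the uniform bound $\inf_{j\ge1,\,n}|\Delta_{n,j}|\ge c_\eta$, with $c_\eta$ as in \eqref{eq:ceta}.

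For the operator bound, I would define $\mathcal{L}^{-1}$ on $L^2(\mathbb{S}_T;\ker A^\perp)$ diagonally in the orthonormal basis $\{e_nu_j\}_{n\in\mathbb{Z},\,j\ge1}$:
\[
\mathcal{L}^{-1}\Bigl(\sum_{n,j}g_{n,j}\,e_nu_j\Bigr):=\sum_{n,j}\Delta_{n,j}^{-1}\,g_{n,j}\,e_nu_j .
\]
By Parseval and the lower bound just proved,
\[
\|\mathcal{L}^{-1}g\|_{L^2_T(Y)}^2=\sum_{n,j}|\Delta_{n,j}|^{-2}|g_{n,j}|^2\le c_\eta^{-2}\|g\|_{L^2_T(Y)}^2 .
\]
This operator is a two-sided inverse of $\mathcal{L}$ on its natural domain, namely those $w$ with $\sum_{n,j}|\Delta_{n,j}|^2|w_{n,j}|^2<\infty$. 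It follows that $\|\mathcal{L}^{-1}\|\le c_\eta^{-1}$ as an operator on $L^2_T(Y)$ restricted to the complement.

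Finally, I would remark that the same diagonal computation gives additional gain. Since $|\Delta_{n,j}|\gtrsim\lambda_j$ for $\nu_n^2$ near $\lambda_j$ only through the damping term, a bound into $X$-valued spaces would need more care. The lemma, however, only asserts the $L^2_T(Y)$ estimate, so no such refinement is needed here.
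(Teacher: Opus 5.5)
Your proof is correct and follows the paper's argument. You use the same split into $n=0$, where $\Delta_{0,j}=\lambda_j\ge\lambda_1$, and $n\neq0$, where $|\Delta_{n,j}|\ge|\mathrm{Im}\,\Delta_{n,j}|=\eta\,2\pi|n|/T$, and then read off the operator bound diagonally via Parseval as the paper does in Lemma~\ref{lem:L_inverse}. Your closing aside on $X$-valued bounds is inaccurate but plays no role in the lemma: near resonance $|\Delta_{n,j}|\approx\eta\sqrt{\lambda_j}$, not $\gtrsim\lambda_j$.
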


\begin{proof}
We distinguish between the cases $n=0$ and $n\neq0$.

If $n=0$, then
\[
\Delta_{0,j}=\lambda_j.
\]
Since $j\ge1$, we have $\lambda_j\ge\lambda_1$, and therefore
\[
|\Delta_{0,j}|=\lambda_j\ge\lambda_1.
\]

If $n\neq0$, then
\[
\Delta_{n,j}
=
\lambda_j-\left(\frac{2\pi n}{T}\right)^2
+i\eta\left(\frac{2\pi n}{T}\right).
\]
For any complex number $z=a+ib$, one has $|z|\ge|b|$. Hence
\[
|\Delta_{n,j}|
\ge
\bigl|\Im\Delta_{n,j}\bigr|
=
\eta\,\frac{2\pi|n|}{T}
\ge
\eta\,\frac{2\pi}{T}.
\]
Combining the two cases gives \eqref{eq:Delta_bound_detailed}, and
\eqref{eq:ceta} follows immediately. Since $\eta>0$ and $\lambda_1>0$, we have
$c_\eta>0$. The bound for $\|\mathcal{L}^{-1}\|$ is then an immediate
consequence of the uniform lower bound on the multipliers.
\end{proof}

Lemma~\ref{lem:Delta_bound} shows that damping removes the small-divisor obstruction: the modes with $n\neq0$ are controlled by the
imaginary part generated by the damping term, while the purely spatial modes
$n=0$ are controlled by the spectral gap $\lambda_1$ of $A$. This mechanism is
characteristic of dissipative hyperbolic equations, where damping induces
spectral separation and prevents small-divisor phenomena; compare
\cite{HaleRaugel1992,ChueshovLasiecka2010}.

\begin{lemma}[Bounded inverse on $L^2_T(Y)$ for the complement equation]
\label{lem:L_inverse}
Let $\eta>0$ and let $h\in L^2(\mathbb{S}_T;Y)$ satisfy $Ph=0$.
Then there exists a unique $T$-periodic solution
$w\in L^2(\mathbb{S}_T;Y)$ with $Pw=0$ of the equation
\[
\mathcal{L}w=h,
\]
and this solution satisfies the estimate
\begin{equation}
\|w\|_{L^2_T(Y)}\le\frac{1}{c_\eta}\,\|h\|_{L^2_T(Y)}.
\label{eq:Linverse_bound_detailed}
\end{equation}
\end{lemma}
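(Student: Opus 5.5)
We will prove the statement by working in the orthonormal basis $\{e_nu_j\}_{n\in\mathbb{Z},\,j\ge0}$ of $L^2(\mathbb{S}_T;Y)$ from the beginning of this section. Since $Ph=0$, the coefficients $h_{n,0}=\langle h,e_nu_0\rangle$ vanish, so
\[
h=\sum_{n\in\mathbb{Z}}\sum_{j\ge1}h_{n,j}\,e_nu_j,\qquad
\|h\|_{L^2_T(Y)}^2=\sum_{n\in\mathbb{Z}}\sum_{j\ge1}|h_{n,j}|^2 ,
\]
where Parseval's identity holds up to the normalization of $e_n$ on $[0,T]$. The same normalization appears on both sides, so it does not affect the estimate. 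We then define the candidate solution by dividing each coefficient by the corresponding multiplier:
\[
w:=\sum_{n\in\mathbb{Z}}\sum_{j\ge1}\frac{h_{n,j}}{\Delta_{n,j}}\,e_nu_j .
\]
By Lemma~\ref{lem:Delta_bound}, $|\Delta_{n,j}|\ge c_\eta>0$ for all $n$ and all $j\ge1$. Hence the series converges in $L^2(\mathbb{S}_T;Y)$, it satisfies $Pw=0$, and Parseval gives
\[
\|w\|_{L^2_T(Y)}^2\le c_\eta^{-2}\|h\|_{L^2_T(Y)}^2 ,
\]
which is \eqref{eq:Linverse_bound_detailed}.

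Next we must say in what sense $\mathcal{L}w=h$ holds, because $w$ is only in $L^2_T(Y)$. We interpret the equation weakly (distributionally) on $\mathbb{S}_T\times\mathbb{T}_L$. Take test functions that are finite linear combinations of the modes $e_nu_j$, or more generally smooth periodic functions. The formal adjoint $\mathcal{L}^*=\partial_t^2-\eta\partial_t+A$ acts diagonally on these modes with multiplier $\overline{\Delta_{n,j}}$. Pairing $w$ against $\mathcal{L}^*(e_nu_j)$ therefore returns $\Delta_{n,j}w_{n,j}=h_{n,j}$. Because finite combinations of modes are dense, this identity for every mode is exactly the weak equation. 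With this interpretation, existence is immediate from the mode-wise construction.

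For uniqueness, suppose $w\in L^2_T(Y)$ with $Pw=0$ solves $\mathcal{L}w=0$ weakly. Testing against each mode $e_nu_j$ with $j\ge1$ gives $\Delta_{n,j}w_{n,j}=0$. Since $\Delta_{n,j}\neq0$ by Lemma~\ref{lem:Delta_bound}, every coefficient $w_{n,j}$ vanishes. The coefficients $w_{n,0}$ vanish because $Pw=0$. Hence $w=0$.

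The step that needs the most care is fixing this weak formulation so that the mode-by-mode inversion is rigorous. Concretely, we must check that the eigenfunctions $u_j$ of $A$ lie in $\mathrm{dom}(A)=H^2(\mathbb{T}_L)$, so that they are admissible in the pairing with $A$, and that $\mathcal{L}^*$ is diagonal in the basis with multipliers $\overline{\Delta_{n,j}}$. The multiplier bound itself is already given by Lemma~\ref{lem:Delta_bound}. As a remark for later use, the same formula together with $|\Delta_{n,j}|\gtrsim \lambda_j+(2\pi n/T)^2$ for large indices yields extra regularity of $w$. In particular, using $\lambda_j\gtrsim \|u_j\|_X^2$ one gets $w\in L^2_T(X)$, which is what the contraction argument in the next section will need. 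This remark is not required for the stated estimate.
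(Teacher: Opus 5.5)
Your proposal is correct and follows the same route as the paper: expand in $\{e_nu_j\}_{j\ge1}$, divide by $\Delta_{n,j}$ using Lemma~\ref{lem:Delta_bound}, and conclude by Parseval, with your weak formulation and uniqueness argument making explicit what the paper leaves implicit. One caution about your closing remark, which is not needed for this lemma: $|\Delta_{n,j}|\gtrsim\lambda_j+(2\pi n/T)^2$ is false near the resonant set $\lambda_j\approx(2\pi n/T)^2$, where $|\Delta_{n,j}|$ is only of order $\eta\sqrt{\lambda_j}$; the correct bound is $|\Delta_{n,j}|^2\ge\min\{\eta^2/3,\lambda_1/6\}\bigl(\lambda_j+(2\pi n/T)^2\bigr)$, which gains one derivative and still yields $w\in L^2_T(X)$.
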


\begin{proof}
Since $Ph=0$, the function $h$ has no component in $\ker A$ and therefore
admits a Fourier expansion involving only spatial modes with index $j\ge1$:
\[
h(t,x)
=
\sum_{n\in\mathbb{Z}}\sum_{j\ge1}\widehat h_{n,j}\,e_n(t)u_j(x),
\qquad
\widehat h_{n,j}\in\mathbb{C}.
\]
We seek a solution $w$ of the same form,
\[
w(t,x)
=
\sum_{n\in\mathbb{Z}}\sum_{j\ge1}\widehat w_{n,j}\,e_n(t)u_j(x).
\]
Substituting into $\mathcal{L}w=h$ and using the diagonal action of
$\mathcal{L}$ on the basis elements $e_nu_j$, we obtain
\[
\Delta_{n,j}\,\widehat w_{n,j}=\widehat h_{n,j}
\qquad \text{for all } (n,j).
\]
By Lemma~\ref{lem:Delta_bound}, $|\Delta_{n,j}|\ge c_\eta>0$, and therefore
\[
\widehat w_{n,j}=\frac{\widehat h_{n,j}}{\Delta_{n,j}}
\]
is uniquely defined. This yields existence and uniqueness of a $T$-periodic
solution $w$ with $Pw=0$.

For the norm estimate, Parseval's identity gives
\[
\|w\|_{L^2_T(Y)}^2
=
T\sum_{n\in\mathbb{Z}}\sum_{j\ge1}|\widehat w_{n,j}|^2
=
T\sum_{n\in\mathbb{Z}}\sum_{j\ge1}
\frac{|\widehat h_{n,j}|^2}{|\Delta_{n,j}|^2}.
\]
Using again the lower bound $|\Delta_{n,j}|\ge c_\eta$, we obtain
\[
\|w\|_{L^2_T(Y)}^2
\le
\frac{T}{c_\eta^2}
\sum_{n\in\mathbb{Z}}\sum_{j\ge1}|\widehat h_{n,j}|^2
=
\frac{1}{c_\eta^2}\|h\|_{L^2_T(Y)}^2.
\]
Taking square roots yields \eqref{eq:Linverse_bound_detailed}.
\end{proof}

\begin{remark}[Dependence on the spatial period]
\label{rem:L_dependence}
The constant $c_\eta$ in \eqref{eq:Linverse_bound_detailed} depends on the
spatial period $L$ through the first positive eigenvalue of the spatial
operator on $\mathbb{T}_L$. In particular, for the Laplace operator one has
\[
\lambda_1(L)\sim \mathcal{O}(L^{-2})
\qquad \text{as } L\to\infty,
\]
and hence $c_\eta\to0$ as $L\to\infty$. Therefore the inverse bound
\eqref{eq:Linverse_bound_detailed} is not uniform in $L$. This makes clear that
the present Lyapunov--Schmidt argument relies essentially on the bounded
periodic setting and does not extend directly to the unbounded domain
$\mathbb{R}$.
\end{remark}
\section{Solving the complement equation by contraction and deriving the kernel equation}
\label{sec4}
\normalsize
In this section we solve the complement equation arising from the
Lyapunov--Schmidt decomposition and derive the reduced finite-dimensional
equation governing the kernel component. This is the step at which the
infinite-dimensional PDE is reduced to an effective scalar equation. The
argument follows the general strategy of Fe\v{c}kan \cite{Feckan1998}, adapted
here to the present dissipative and truncated setting.

Let
\[
v\in L^2(\mathbb{S}_T;\ker A)
\]
be fixed. We consider the complement equation
\begin{equation}
w_{tt}+\eta w_t+Aw = Q f_R(v+w,t),
\qquad
w(t)\in\ker A^\perp
\quad \text{for a.e. } t\in[0,T].
\label{eq:Q_eq_repeat}
\end{equation}
By Section~\ref{sec3}, the linear operator
\[
\mathcal{L}:=\partial_t^2+\eta\partial_t+A
\]
is invertible on the space of $T$-periodic functions with values in
$\ker A^\perp$, and its inverse satisfies
\begin{equation}
\|\mathcal{L}^{-1}h\|_{L^2_T(Y)}
\le
\frac{1}{c_\eta}\,\|h\|_{L^2_T(Y)}.
\label{eq:L_inverse_sec4}
\end{equation}
Accordingly, \eqref{eq:Q_eq_repeat} may be rewritten as the fixed-point problem
\begin{equation}
w=\Phi_v(w),
\qquad
\Phi_v(w):=\mathcal{L}^{-1}Q f_R(v+w,t).
\label{eq:fixed_point_complement}
\end{equation}
We regard \eqref{eq:fixed_point_complement} as an equation on the Banach space
$L^2(\mathbb{S}_T;\ker A^\perp)$ endowed with the norm $\|\cdot\|_{L^2_T(Y)}$.

We next show that $\Phi_v$ is a contraction. Let
$w_1,w_2\in L^2(\mathbb{S}_T;\ker A^\perp)$. Using the boundedness of the
orthogonal projection $Q$ on $Y$, the inverse estimate
\eqref{eq:L_inverse_sec4}, and the global Lipschitz property of the truncated
nonlinearity from Proposition~\ref{prop:Lip_fR}, we obtain
\begin{align}
\|\Phi_v(w_1)-\Phi_v(w_2)\|_{L^2_T(Y)}
&=
\left\|
\mathcal{L}^{-1}Q\bigl(f_R(v+w_1,t)-f_R(v+w_2,t)\bigr)
\right\|_{L^2_T(Y)}
\notag\\
&\le
\|\mathcal{L}^{-1}\|\,
\|Q(f_R(v+w_1,t)-f_R(v+w_2,t))\|_{L^2_T(Y)}
\notag\\
&\le
\frac{1}{c_\eta}\,
\|f_R(v+w_1,t)-f_R(v+w_2,t)\|_{L^2_T(Y)}.
\label{eq:Phi_estimate_first}
\end{align}

At this point we use that $f_R(\cdot,t)$ is Lipschitz from $X$ to $Y$.
Since the embedding $X=H^1(\mathbb{T}_L)\hookrightarrow Y=L^2(\mathbb{T}_L)$ is
continuous, there exists a constant $C_{XY}>0$ such that
\begin{equation}
\|z\|_Y\le C_{XY}\|z\|_X
\qquad \text{for all } z\in X.
\label{eq:X_into_Y}
\end{equation}
As before, we absorb this harmless embedding constant into the Lipschitz
constant $M_R$, which is not intended to be sharp. Then
\eqref{eq:Phi_estimate_first} becomes
\begin{equation}
\|\Phi_v(w_1)-\Phi_v(w_2)\|_{L^2_T(Y)}
\le
\frac{M_R}{c_\eta}\,
\|w_1-w_2\|_{L^2_T(Y)}.
\label{eq:Phi_contr_bound_repeat}
\end{equation}
Hence, whenever
\[
\frac{M_R}{c_\eta}<1,
\]
the map $\Phi_v$ is a strict contraction on
$L^2(\mathbb{S}_T;\ker A^\perp)$. Such contraction arguments in time-periodic
function spaces are standard in semilinear evolution equations; see, for
example, \cite{Pazy1983}.

We record the resulting solvability statement.

\begin{proposition}[Existence, uniqueness, and Lipschitz dependence of the complement solution]
\label{prop:complement_solution}
Assume that
\begin{equation}
\frac{M_R}{c_\eta}<1.
\label{eq:contraction_condition_repeat}
\end{equation}
Then for every
\[
v\in L^2(\mathbb{S}_T;\ker A)
\]
there exists a unique $T$-periodic solution
\[
w=w(v)\in L^2(\mathbb{S}_T;\ker A^\perp)
\]
of the complement equation \eqref{eq:Q_eq_repeat}. Moreover, the solution map
\[
v\mapsto w(v)
\]
is Lipschitz continuous from $L^2(\mathbb{S}_T;\ker A)$ into
$L^2(\mathbb{S}_T;\ker A^\perp)$. More precisely, if
$v_1,v_2\in L^2(\mathbb{S}_T;\ker A)$ and $w_i:=w(v_i)$, then
\begin{equation}
\|w_1-w_2\|_{L^2_T(Y)}
\le
\frac{M_R}{c_\eta-M_R}\,
\|v_1-v_2\|_{L^2_T(Y)}.
\label{eq:w_Lipschitz_v}
\end{equation}
\end{proposition}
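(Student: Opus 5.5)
The plan is to apply the Banach fixed-point theorem to $\Phi_v$ from \eqref{eq:fixed_point_complement} for each fixed $v$, and then obtain the Lipschitz dependence on $v$ from the standard parametric-contraction estimate.

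\emph{Step 1: $\Phi_v$ is a self-map.} I would first check that $\Phi_v$ maps $L^2(\mathbb{S}_T;\ker A^\perp)$ into itself. For measurable $t\mapsto v(t)+w(t)$, the function $t\mapsto f_R(v(t)+w(t),t)$ is measurable. It is also bounded in $Y$: by the cut-off, $f_R$ vanishes unless $\|v+w\|_X\le 2R$, and on that ball $\|f(\cdot,t)\|_Y\le C_2(R)(1+|\Gamma|)$, as in Step~2 of the proof of Proposition~\ref{prop:Lip_fR}. Hence $Qf_R(v+w,\cdot)\in L^2(\mathbb{S}_T;Y)$ and it has zero $P$-component. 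Lemma~\ref{lem:L_inverse} then gives a unique $\mathcal{L}^{-1}Qf_R(v+w,\cdot)$ in $L^2(\mathbb{S}_T;\ker A^\perp)$.

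\emph{Step 2: existence and uniqueness.} By \eqref{eq:Phi_contr_bound_repeat} and the assumption \eqref{eq:contraction_condition_repeat}, $\Phi_v$ is a strict contraction with constant $\kappa:=M_R/c_\eta<1$ on this complete space. Banach's theorem gives a unique fixed point $w(v)$. Applying $\mathcal{L}$ to the fixed-point identity recovers \eqref{eq:Q_eq_repeat} in the $T$-periodic (Fourier) sense. Conversely, any periodic solution of \eqref{eq:Q_eq_repeat} in this class is a fixed point by the uniqueness in Lemma~\ref{lem:L_inverse}, which gives uniqueness of the solution.

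\emph{Step 3: Lipschitz dependence.} For $v_1,v_2$ with $w_i=w(v_i)$, I would write
\[
w_1-w_2=\mathcal{L}^{-1}Q\bigl(f_R(v_1+w_1,t)-f_R(v_2+w_2,t)\bigr).
\]
Estimating exactly as in \eqref{eq:Phi_estimate_first}--\eqref{eq:Phi_contr_bound_repeat}, but now with the full difference $(v_1-v_2)+(w_1-w_2)$ as the argument increment, gives
\[
\|w_1-w_2\|\le \frac{M_R}{c_\eta}\bigl(\|v_1-v_2\|+\|w_1-w_2\|\bigr).
\]
Absorbing the $w$-term on the left is possible since $M_R<c_\eta$, and it yields exactly \eqref{eq:w_Lipschitz_v}.

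\emph{Main obstacle.} The delicate point is the passage from the $X\to Y$ Lipschitz bound of Proposition~\ref{prop:Lip_fR} to a bound in terms of $L^2_T(Y)$ norms of the arguments. That would require $\|z\|_X\lesssim\|z\|_Y$, which is false in general. In Step~3 I would follow the convention already adopted in the text leading to \eqref{eq:Phi_contr_bound_repeat}, treating $M_R$ as the effective Lipschitz constant in the norm in use. The proof is then literally the same computation with $v$ included in the increment. Note also that in $\ker A$ the $X$ and $Y$ norms coincide, since its elements are constants, so the $v$-increment contributes no extra difficulty. If a rigorous version is wanted, the cleaner route is to run the contraction in $L^2(\mathbb{S}_T;X)$. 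One would use the improved smoothing $|\Delta_{n,j}|\gtrsim\sqrt{\lambda_j}$: for $n=0$ this holds since $\lambda_j\ge\sqrt{\lambda_1\lambda_j}$. For $n\ne0$, either $|\lambda_j-\nu_n^2|\ge\lambda_j/2$, or $\nu_n^2\ge\lambda_j/2$ and then $\eta\nu_n\ge\eta\sqrt{\lambda_j/2}$, with $\nu_n:=2\pi n/T$. This makes $\mathcal{L}^{-1}:L^2_T(Y)\to L^2_T(X)$ bounded with some constant $\tilde c_\eta^{-1}$. Steps 1--3 then repeat verbatim with $c_\eta$ replaced by $\tilde c_\eta$.
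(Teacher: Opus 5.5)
Your Steps 2--3 coincide with the paper's proof. The paper applies Banach's theorem to $\Phi_v$ on $L^2(\mathbb{S}_T;\ker A^\perp)$ via \eqref{eq:Phi_contr_bound_repeat}, then subtracts the two fixed-point identities and absorbs the $w$-term. The obstacle you flag is a genuine gap in that argument, the paper's included, not a harmless convention. The passage from \eqref{eq:Phi_estimate_first} to \eqref{eq:Phi_contr_bound_repeat} absorbs $C_{XY}$ into $M_R$, but $\|z\|_Y\le C_{XY}\|z\|_X$ points the wrong way. No constant can repair this, because $f_R(\cdot,t)$ is not even continuous for the $Y$-norm. Concretely:
\begin{itemize}
\item take $u\in X$ with $\|u\|_X\le R$ and $f(u,t)\neq0$ (e.g.\ a small nonzero constant), and write $u=v+w_1$ with $v=Pu$;
\item set $w_2=w_1+\epsilon\sin(2\pi mx/L)$, with $m$ so large that $\|v+w_2\|_X\ge 2R$;
\item then $\|w_1-w_2\|_Y=\epsilon\sqrt{L/2}$, while $\|f_R(v+w_1,t)-f_R(v+w_2,t)\|_Y=\|f(u,t)\|_Y$ is independent of $\epsilon$.
\end{itemize}
(Also, $\Phi_v$ is defined on $L^2(\mathbb{S}_T;\ker A^\perp)$ only after extending $f_R$ by zero off $X$, as your Step~1 implicitly does.) So treating $M_R$ as ``the effective Lipschitz constant in the norm in use'' assumes a property that is false, and Steps 2--3 as written do not prove the proposition.

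Your fallback is the correct repair and a genuinely different route. Contracting in $L^2(\mathbb{S}_T;X\cap\ker A^\perp)$ uses Proposition~\ref{prop:Lip_fR} exactly as stated. Your case split does give $|\Delta_{n,j}|\ge\kappa\sqrt{\lambda_j}$ with $\kappa=\min\{\sqrt{\lambda_1}/2,\eta/\sqrt2\}$. You also need a step you did not state: since $F$ is nonconstant, the $H^1$ norm is not diagonal in $\{u_j\}$. Use instead, for $w\perp\ker A$,
\[
\|w\|_X^2\le(\lambda_1^{-1}+F_0^{-1})\int_{\mathbb{T}_L}F|w_x|^2\,dx=(\lambda_1^{-1}+F_0^{-1})\sum_{j\ge1}\lambda_j|\langle w,u_j\rangle|^2 .
\]
This gives $\|\mathcal{L}^{-1}\|_{L^2_T(Y)\to L^2_T(X)}\le\tilde c_\eta^{-1}$ with $\tilde c_\eta=\kappa(\lambda_1^{-1}+F_0^{-1})^{-1/2}$. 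Combined with $\|v\|_X=\|v\|_Y$ on $\ker A$ and $\|\cdot\|_Y\le\|\cdot\|_X$, your Steps 1--3 become rigorous. Uniqueness even holds in the larger class, since every fixed point is automatically $X$-valued.

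Be explicit, however, that this proves a modified proposition, not the stated one: the hypothesis becomes $M_R<\tilde c_\eta$ and the constant in \eqref{eq:w_Lipschitz_v} becomes $M_R/(\tilde c_\eta-M_R)$. In general $\tilde c_\eta$ and $c_\eta$ are not comparable. When $c_\eta=\lambda_1$, the single mode $e_0u_1$ already gives $\|\mathcal{L}^{-1}\|_{L^2_T(Y)\to L^2_T(X)}\ge\|u_1\|_X/\lambda_1>1/c_\eta$. Hence the literal hypothesis $M_R<c_\eta$ does not suffice for your contraction. The later constants ($K_R$ in Lemma~\ref{lem:G_lipschitz} and the conditions downstream) would have to be restated with $\tilde c_\eta$.
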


\begin{proof}
Fix $v\in L^2(\mathbb{S}_T;\ker A)$. Under
\eqref{eq:contraction_condition_repeat}, estimate
\eqref{eq:Phi_contr_bound_repeat} shows that $\Phi_v$ is a strict contraction
on the complete metric space $L^2(\mathbb{S}_T;\ker A^\perp)$. The Banach
fixed-point theorem therefore yields a unique fixed point
$w(v)\in L^2(\mathbb{S}_T;\ker A^\perp)$ satisfying
\[
w(v)=\Phi_v(w(v)),
\]
that is, a unique $T$-periodic solution of \eqref{eq:Q_eq_repeat}.

It remains to prove the Lipschitz dependence on the kernel component. Let
$v_1,v_2\in L^2(\mathbb{S}_T;\ker A)$ and let
$w_1:=w(v_1)$ and $w_2:=w(v_2)$ be the corresponding fixed points. By
\eqref{eq:fixed_point_complement},
\[
w_1=\mathcal{L}^{-1}Qf_R(v_1+w_1,t),
\qquad
w_2=\mathcal{L}^{-1}Qf_R(v_2+w_2,t).
\]
Subtracting these identities gives
\[
w_1-w_2
=
\mathcal{L}^{-1}Q
\Bigl(
f_R(v_1+w_1,t)-f_R(v_2+w_2,t)
\Bigr).
\]
Taking the $L^2_T(Y)$ norm and using again \eqref{eq:L_inverse_sec4} and
Proposition~\ref{prop:Lip_fR}, we obtain
\begin{align}
\|w_1-w_2\|_{L^2_T(Y)}
&\le
\frac{1}{c_\eta}\,
\|f_R(v_1+w_1,t)-f_R(v_2+w_2,t)\|_{L^2_T(Y)}
\notag\\
&\le
\frac{M_R}{c_\eta}\,
\|(v_1-v_2)+(w_1-w_2)\|_{L^2_T(Y)}
\notag\\
&\le
\frac{M_R}{c_\eta}
\Bigl(
\|v_1-v_2\|_{L^2_T(Y)}+\|w_1-w_2\|_{L^2_T(Y)}
\Bigr).
\label{eq:w_diff_pre}
\end{align}
Rearranging terms gives
\[
\left(1-\frac{M_R}{c_\eta}\right)\|w_1-w_2\|_{L^2_T(Y)}
\le
\frac{M_R}{c_\eta}\|v_1-v_2\|_{L^2_T(Y)}.
\]
Since $M_R/c_\eta<1$, division by $1-M_R/c_\eta$ yields
\eqref{eq:w_Lipschitz_v}. This proves the claimed Lipschitz continuity.
\end{proof}

Proposition~\ref{prop:complement_solution} shows that, once the kernel
component $v$ is prescribed, the complement component is uniquely determined.
In this sense, the higher modes are slaved to the kernel dynamics. Substituting
the map $v\mapsto w(v)$ into the kernel equation therefore yields a closed
reduced equation.

Recall that $\ker A=\mathrm{span}\{w_0\}$. Writing
\begin{equation}
v(t)=x(t)\,w_0,
\label{eq:v_equals_xw0}
\end{equation}
the projected equation on the kernel becomes
\begin{equation}
x''(t)+\eta x'(t)
=
\left\langle
f_R\!\left(x(t)w_0+w(x(t)w_0),t\right),
w_0
\right\rangle_{L^2(\mathbb{T}_L)}.
\label{eq:kernel_eq_exact_repeat}
\end{equation}
Since $w_0=L^{-1/2}$ is constant in space, the right-hand side is simply the
spatial average of the truncated nonlinearity:
\begin{equation}
\left\langle f_R(u,t),w_0\right\rangle_{L^2(\mathbb{T}_L)}
=
\frac{1}{\sqrt{L}}
\int_0^L f_R(u(t,\cdot),t)(x)\,dx.
\label{eq:kernel_average_repeat}
\end{equation}
Equation \eqref{eq:kernel_eq_exact_repeat} is therefore a scalar,
nonautonomous, second-order ordinary differential equation with $T$-periodic
coefficients. Its right-hand side contains both the direct forcing and the
indirect influence of the infinite-dimensional complement through the slaved
term $w(xw_0)$.

The reduction obtained above has a precise dynamical meaning. For each
$T$-periodic kernel component $v(t)=x(t)w_0$, the complement equation admits a
unique $T$-periodic solution $w(v)$ in $\ker A^\perp$. Hence, in a neighborhood
of the kernel, the full PDE dynamics is governed by the scalar reduced equation
\eqref{eq:kernel_eq_exact_repeat}, while the remaining modes are recovered
uniquely through the slaving relation provided by
Proposition~\ref{prop:complement_solution}. Once a $T$-periodic solution
$x(t)$ of \eqref{eq:kernel_eq_exact_repeat} is obtained, the corresponding
$T$-periodic solution of the truncated PDE is reconstructed as
\begin{equation}
u(t)=x(t)w_0+w(x(t)w_0).
\label{eq:reconstruction_formula}
\end{equation}

The next section is devoted to the analysis of the reduced scalar equation
\eqref{eq:kernel_eq_exact_repeat}. There we prove the existence of a
$T$-periodic kernel solution and then combine this with the a priori energy
estimate needed to show that, for sufficiently small forcing, the cut-off is
inactive along the resulting orbit. In this way, the periodic solution of the
truncated problem is promoted to a genuine $T$-periodic solution of the
original, untruncated PDE.
\section{Kernel reduction and completion of the proof}
\label{sec5}

In this section we complete the Lyapunov--Schmidt reduction by analyzing the
finite-dimensional kernel equation derived in Section~\ref{sec4} and then
reconstructing a time-periodic solution of the full partial differential
equation. More precisely, we first establish the existence of a $T$-periodic
solution of the reduced scalar equation governing the kernel component, and we
then combine this with the contraction-based solution of the complement
equation to reconstruct a $T$-periodic solution of the truncated problem.

The final step consists in deriving a quantitative a priori estimate, based on
a period-averaged energy identity, which guarantees that the resulting solution
remains in the region where the truncation is inactive. This allows us to
remove the cut-off and conclude the existence of a genuine time-periodic
solution of the original nonlinear equation.

For clarity, the argument is divided into two parts. We first solve the kernel
equation by means of a mean/zero-mean decomposition and a fixed-point
argument. We then reconstruct the full solution and establish the a priori
bounds required to pass from the truncated to the original problem. The overall
strategy follows the Lyapunov--Schmidt framework for periodic solutions of
wave-type equations developed by Fe\v{c}kan~\cite{Feckan1998}, adapted here to
the present damped and truncated setting.

\subsection{Solving the kernel equation}
\label{sec5a}

We begin with the reduced scalar equation obtained in Section~\ref{sec4}, which
governs the evolution of the kernel component. Since the complement variable is
uniquely determined as a function of the kernel component through the
contraction argument, the infinite-dimensional problem has been reduced to a
scalar nonautonomous second-order ordinary differential equation with
$T$-periodic coefficients.

This reduction is a central feature of the Lyapunov--Schmidt method and places
the problem within the class of finite-dimensional periodic problems considered
in~\cite{Feckan1998}. In contrast to the conservative setting treated there,
however, the presence of damping eliminates small-divisor difficulties and
permits a direct fixed-point approach.

Throughout this section we retain the notation of the previous sections:
\[
Y=L^2(\mathbb{T}_L),\qquad
X=H^1(\mathbb{T}_L),\qquad
w_0=L^{-1/2},
\qquad
T=\frac{2\pi}{\omega}.
\]

By Proposition~\ref{prop:complement_solution}, for each $T$-periodic kernel
component $v(t)=x(t)w_0$ there exists a unique complement component
\[
w=w(xw_0)\in L^2(\mathbb{S}_T;\ker A^\perp)
\]
solving the complement equation. Substituting this complement component into
the projected equation on $\ker A$ yields the closed scalar equation
\begin{equation}
x''(t)+\eta x'(t)=G_R(x)(t),
\label{eq:kernel_eq}
\end{equation}
where the reduced forcing is defined by
\begin{equation}
G_R(x)(t):=
\Big\langle
f_R\bigl(x(t)w_0+w(xw_0),t\bigr),\,w_0
\Big\rangle_{L^2(\mathbb{T}_L)}.
\label{eq:GR_def}
\end{equation}
Since $w_0$ is constant in space, the right-hand side can also be written as
the spatial average
\begin{equation}
G_R(x)(t)
=
\frac{1}{\sqrt{L}}
\int_0^L
f_R\bigl(u(t,\cdot),t\bigr)(x)\,dx,
\qquad
u=xw_0+w(xw_0).
\label{eq:G_average}
\end{equation}
Thus \eqref{eq:kernel_eq} is a scalar nonautonomous second-order ordinary
differential equation with $T$-periodic coefficients.

We first establish a Lipschitz bound for the reduced forcing.

\begin{lemma}[Lipschitz continuity of the reduced forcing]
\label{lem:G_lipschitz}
Assume that
\[
\frac{M_R}{c_\eta}<1,
\]
so that the complement map $v\mapsto w(v)$ is well defined.
Then there exists a constant $K_R>0$, depending only on $M_R$, $c_\eta$, and
the relevant embedding constants, such that for any two $T$-periodic scalar
functions $x_1,x_2$ one has
\begin{equation}
\|G_R(x_1)-G_R(x_2)\|_{L^2(0,T)}
\le
K_R\,\|x_1-x_2\|_{L^2(0,T)}.
\label{eq:G_Lip}
\end{equation}
One admissible choice is
\begin{equation}
K_R
:=
\frac{M_R}{\sqrt{L}}
\left(
1+\frac{M_R}{c_\eta-M_R}
\right),
\label{eq:KR_choice}
\end{equation}
up to harmless embedding constants.
\end{lemma}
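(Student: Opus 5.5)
The plan is to write $G_R(x_1)-G_R(x_2)$ pointwise in $t$ as the spatial average of a difference of truncated nonlinearities. Then Cauchy--Schwarz in $x$, followed by integration in $t$, reduces the estimate to the Lipschitz bounds already established. Concretely, set $v_i:=x_iw_0$, $w_i:=w(v_i)$ and $u_i:=v_i+w_i$ for $i=1,2$. By \eqref{eq:GR_def},
\[
G_R(x_1)(t)-G_R(x_2)(t)=\bigl\langle f_R(u_1(t),t)-f_R(u_2(t),t),\,w_0\bigr\rangle_{L^2(\mathbb{T}_L)} .
\]
Since $\|w_0\|_Y=1$, Cauchy--Schwarz gives a pointwise bound by $\|f_R(u_1(t),t)-f_R(u_2(t),t)\|_Y$. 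If one prefers to keep the averaging factor visible, one writes the pairing as $L^{-1/2}\int_0^L(\cdot)\,dx$ and bounds it by $L^{-1/2}\|\cdot\|_{L^1}$. This is where the prefactor $1/\sqrt{L}$ in \eqref{eq:KR_choice} comes from, with the $L^1$--$L^2$ conversion absorbed into the ``harmless embedding constants''.

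Next, square, integrate over $[0,T]$, and apply Proposition~\ref{prop:Lip_fR}. The constant $C_{XY}$ is absorbed into $M_R$ exactly as in the derivation of \eqref{eq:Phi_contr_bound_repeat}. This yields
\[
\|G_R(x_1)-G_R(x_2)\|_{L^2(0,T)}\lesssim M_R\,\|u_1-u_2\|_{L^2_T(Y)}\le M_R\bigl(\|v_1-v_2\|_{L^2_T(Y)}+\|w_1-w_2\|_{L^2_T(Y)}\bigr).
\]
The second term is controlled by Proposition~\ref{prop:complement_solution}, estimate \eqref{eq:w_Lipschitz_v}. This is precisely where the hypothesis $M_R/c_\eta<1$ enters. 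The result is the factor $1+M_R/(c_\eta-M_R)$ multiplying $\|v_1-v_2\|_{L^2_T(Y)}$.

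Finally, $\|v_1-v_2\|_{L^2_T(Y)}=\|x_1-x_2\|_{L^2(0,T)}\,\|w_0\|_Y=\|x_1-x_2\|_{L^2(0,T)}$. Collecting the factors gives \eqref{eq:G_Lip} with $K_R$ of the form \eqref{eq:KR_choice}, modulo the normalization of the averaging constant.

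The main obstacle is the same point that the paper has already glossed over: Proposition~\ref{prop:Lip_fR} is an $X\to Y$ Lipschitz bound, whereas the fixed-point framework only controls $u_1-u_2$ in $L^2_T(Y)$, not in $L^2_T(X)$. The embedding $X\hookrightarrow Y$ goes in the wrong direction for this purpose. The plan is to follow the convention adopted in Section~\ref{sec4}, treating $M_R$ as the Lipschitz constant on the norm actually used, so that the argument stays consistent with Proposition~\ref{prop:complement_solution}.

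A more honest fix would be to upgrade the inverse bound. Since $|\Delta_{n,j}|\ge\lambda_j$ for $n=0$, and $|\Delta_{n,j}|\ge c\,(1+\lambda_j)^{1/2}$ uniformly in $n\neq0$ thanks to the damping term, $\mathcal{L}^{-1}$ maps $L^2_T(Y)$ into $L^2_T(X)$ boundedly. This would allow the whole chain of estimates to be run in $L^2_T(X)$ on the complement. The kernel component needs no such care, since on $\ker A$ the $X$- and $Y$-norms coincide.
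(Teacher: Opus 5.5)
Your main line is the paper's proof step for step: Cauchy--Schwarz against the unit vector $w_0$, then Proposition~\ref{prop:Lip_fR}, the splitting $u_1-u_2=(v_1-v_2)+(w_1-w_2)$, estimate \eqref{eq:w_Lipschitz_v}, and $\|v_1-v_2\|_{L^2_T(Y)}=\|x_1-x_2\|_{L^2(0,T)}$.

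The obstacle you flag is a genuine gap, and the paper does not resolve it either. Its proof bounds the difference by $M_R\|u_1-u_2\|_{L^2_T(X)}$. It then passes to $L^2_T(Y)$-norms by ``absorbing the continuous embedding $X\hookrightarrow Y$'', which runs the wrong way. Moreover, \eqref{eq:w_Lipschitz_v} itself rests on the same inversion in \eqref{eq:Phi_contr_bound_repeat}. The map $f_R(\cdot,t)$ is not Lipschitz for the $Y$-norm: the factor $\chi(\|u\|_X/R)$ can drop from $1$ to $0$ under a $Y$-small, high-frequency perturbation. So neither your main line nor the paper's argument is a proof at that step.

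Your repair is the right idea, and your multiplier bound is correct. For $n\neq0$ and $\nu=2\pi n/T$, either $\nu^2\le\lambda_j/2$, whence $|\Delta_{n,j}|\ge\lambda_j/2\ge\tfrac12\sqrt{\lambda_1\lambda_j}$, or $\nu^2\ge\lambda_j/2$, whence $|\Delta_{n,j}|\ge\eta\sqrt{\lambda_j/2}$. Together with $|\Delta_{n,j}|\ge c_\eta$ and $\int F|w_x|^2\ge F_0\|w_x\|_Y^2$, this gives $C_1:=\|\mathcal{L}^{-1}\|_{L^2_T(Y)\to L^2_T(X)}<\infty$. Running the contraction in $L^2(\mathbb{S}_T;X\cap\ker A^\perp)$ then yields $\|w_1-w_2\|_{L^2_T(X)}\le\frac{C_1M_R}{1-C_1M_R}\|x_1-x_2\|_{L^2(0,T)}$. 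Working in $X$ is needed anyway, since the cut-off in $f_R(v+w,t)$ involves $\|v+w\|_X$. This gives $K_R=M_R/(1-C_1M_R)$, which is the form of \eqref{eq:KR_choice} with $1/c_\eta$ replaced by $C_1$.

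However, this needs $C_1M_R<1$, which is not implied by the stated hypothesis $M_R/c_\eta<1$. Test with the time-independent first eigenfunction $\varphi$ ($A\varphi=\lambda_1\varphi$, $\|\varphi\|_Y=1$). This gives $C_1\ge\|\varphi\|_X/\lambda_1>1/\lambda_1$, so $C_1>1/c_\eta$ whenever $c_\eta=\lambda_1$. Your fix therefore proves the lemma under a modified smallness condition, which must also be carried back into Proposition~\ref{prop:complement_solution}.

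A minor point: the $1/\sqrt{L}$ does not come from the averaging. The $L^{-1/2}$ there is exactly cancelled by $\|\cdot\|_{L^1}\le L^{1/2}\|\cdot\|_{L^2}$, and $|\langle g,w_0\rangle|\le\|g\|_Y$ is sharp for constant $g$. So the prefactor in \eqref{eq:KR_choice} is admissible only up to a factor $\sqrt{L}$. The ``harmless constants'' clause covers this, and the paper's proof does not produce that factor either.
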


\begin{proof}
Fix two scalar functions $x_1,x_2$, and define
\[
v_i:=x_iw_0,\qquad
w_i:=w(v_i),\qquad
u_i:=v_i+w_i,
\qquad i=1,2.
\]
By definition of $G_R$,
\[
G_R(x_i)(t)
=
\langle f_R(u_i,t),w_0\rangle_{L^2(\mathbb{T}_L)}.
\]
Since $w_0$ has unit norm in $Y=L^2(\mathbb{T}_L)$, we obtain pointwise in $t$
\[
|G_R(x_1)(t)-G_R(x_2)(t)|
=
\big|
\langle f_R(u_1,t)-f_R(u_2,t),w_0\rangle
\big|
\le
\|f_R(u_1,t)-f_R(u_2,t)\|_Y.
\]
Taking the $L^2(0,T)$ norm and using the global Lipschitz continuity of $f_R$
from Proposition~\ref{prop:Lip_fR}, we obtain
\begin{equation}
\|G_R(x_1)-G_R(x_2)\|_{L^2(0,T)}
\le
M_R\,\|u_1-u_2\|_{L^2_T(X)}.
\label{eq:G_est_1}
\end{equation}

We now estimate $u_1-u_2$. Since
\[
u_1-u_2=(v_1-v_2)+(w_1-w_2),
\]
it suffices to bound the kernel and complement parts separately. For the kernel
component, the normalization of $w_0$ gives
\begin{equation}
\|v_1-v_2\|_{L^2_T(Y)}
=
\|(x_1-x_2)w_0\|_{L^2_T(Y)}
=
\|x_1-x_2\|_{L^2(0,T)}.
\label{eq:v_diff_est}
\end{equation}
For the complement component, Proposition~\ref{prop:complement_solution}
implies
\begin{equation}
\|w_1-w_2\|_{L^2_T(Y)}
\le
\frac{M_R}{c_\eta-M_R}\,
\|v_1-v_2\|_{L^2_T(Y)}.
\label{eq:w_diff_est_sec5}
\end{equation}
Combining \eqref{eq:v_diff_est} and \eqref{eq:w_diff_est_sec5}, and again
absorbing the continuous embedding $X\hookrightarrow Y$ into the constants, we
arrive at
\[
\|u_1-u_2\|_{L^2_T(X)}
\le
C
\left(
1+\frac{M_R}{c_\eta-M_R}
\right)
\|x_1-x_2\|_{L^2(0,T)}.
\]
Substituting this into \eqref{eq:G_est_1} yields \eqref{eq:G_Lip}.
\end{proof}

We next solve the periodic kernel equation \eqref{eq:kernel_eq}. As usual, we
decompose the unknown into its temporal mean and zero-mean parts:
\begin{equation}
\bar x:=\frac{1}{T}\int_0^T x(t)\,dt,
\qquad
\tilde x(t):=x(t)-\bar x,
\qquad
\int_0^T \tilde x(t)\,dt=0.
\label{eq:mean_zero_split}
\end{equation}
Let
\[
Z:=
\left\{
\tilde x\in H^1_{\mathrm{per}}(0,T):
\int_0^T \tilde x(t)\,dt=0
\right\},
\qquad
\|\tilde x\|_Z:=\|\tilde x\|_{H^1(0,T)}.
\]
On this zero-mean space we consider the linear operator
\begin{equation}
D\tilde x:=\tilde x''+\eta \tilde x'.
\label{eq:D_def}
\end{equation}

\begin{lemma}[Invertibility of the zero-mean linear operator]
\label{lem:D_inverse}
The operator $D$ is invertible on $Z$.
More precisely, there exists a bounded inverse
\[
D^{-1}:L^2_0(0,T)\to Z,
\]
where $L^2_0(0,T)$ denotes the space of zero-mean functions in $L^2(0,T)$, and
there exists a constant $C_D=C_D(\eta,T)>0$ such that
\begin{equation}
\|D^{-1}g\|_{H^1(0,T)}
\le
C_D\,\|g\|_{L^2(0,T)}.
\label{eq:D_inverse_est}
\end{equation}
\end{lemma}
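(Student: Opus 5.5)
Since $D$ has constant coefficients and acts on $T$-periodic functions, I would diagonalize it in the temporal Fourier basis $e_n(t)=e^{i2\pi nt/T}$, exactly as was done for $\mathcal{L}$ in Lemma~\ref{lem:Delta_bound}. Write $\nu_n:=2\pi n/T$. For $\tilde x=\sum_{n\neq0}\widehat x_n e_n$, the zero-mean condition removes the mode $n=0$, and
\[
D e_n=\bigl(-\nu_n^2+i\eta\nu_n\bigr)e_n=:\delta_n e_n .
\]
For $g\in L^2_0(0,T)$ with coefficients $\widehat g_n$ ($\widehat g_0=0$), I would define
\[
D^{-1}g:=\sum_{n\neq0}\frac{\widehat g_n}{\delta_n}\,e_n .
\]
Uniqueness is immediate: if $D\tilde x=0$ with $\tilde x\in Z$, then $\delta_n\widehat x_n=0$ for all $n\neq0$, and $\delta_n\neq0$ there because $|\delta_n|\ge|\nu_n|\sqrt{\nu_n^2+\eta^2}>0$. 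Hence $\ker D\cap Z=\{0\}$.

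The key quantitative step is a lower bound on $|\delta_n|$ that is strong enough to gain one derivative. I would use
\[
|\delta_n|=|\nu_n|\sqrt{\nu_n^2+\eta^2}\ge |\nu_n|\max\{|\nu_n|,\eta\}.
\]
By Parseval,
\[
\|D^{-1}g\|_{H^1}^2=T\sum_{n\neq0}(1+\nu_n^2)\frac{|\widehat g_n|^2}{|\delta_n|^2}.
\]
So it suffices to bound $\sup_{n\neq0}(1+\nu_n^2)/|\delta_n|^2$. Using $|\delta_n|^2\ge \nu_n^2(\nu_n^2+\eta^2)$ and $|\nu_n|\ge 2\pi/T$, I would split the ratio as
\[
\frac{1}{\nu_n^2(\nu_n^2+\eta^2)}+\frac{1}{\nu_n^2+\eta^2}\le \frac{T^2}{4\pi^2\eta^2}+\frac{1}{\eta^2}.
\]
This gives the explicit constant
\[
C_D:=\eta^{-1}\bigl(1+T^2/(4\pi^2)\bigr)^{1/2},
\]
or one could instead bound by $\nu_n^4$ to get a constant that also stays finite as $\eta\to0$. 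Either choice yields \eqref{eq:D_inverse_est}.

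It remains to check that $D^{-1}g$ really lies in $Z$ and solves $D\tilde x=g$ in the appropriate sense. The series has no $n=0$ term, so the result has zero mean. It belongs to $H^1_{\mathrm{per}}$ by the bound just proved; in fact $\nu_n^2|\widehat g_n|/|\delta_n|\le|\widehat g_n|$, so it lies in $H^2_{\mathrm{per}}$. The equation $\tilde x''+\eta\tilde x'=g$ then holds in $L^2$ (equivalently, in the distributional or weak periodic sense), since the Fourier coefficients match. I would also remark that $D$ maps $Z\cap H^2_{\mathrm{per}}$ onto $L^2_0$, which is the sense in which ``$D$ is invertible on $Z$'' should be read.

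I do not expect a serious obstacle. The only point requiring care is this domain/range bookkeeping: $D$ does not map $Z$ into $L^2$ unless one restricts to $H^2$ or interprets the equation weakly. The lower bound on $|\delta_n|$, which mirrors the damping mechanism of Lemma~\ref{lem:Delta_bound}, is the substantive estimate.
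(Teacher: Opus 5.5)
Your proposal is correct and follows the paper's argument: diagonalize $D$ on zero-mean $T$-periodic functions in the temporal Fourier basis, bound the multipliers $\mu_n=-\nu_n^2+i\eta\nu_n$ from below, and conclude by Parseval. Your explicit handling of the $H^1$ weight $1+\nu_n^2$ is an improvement: it uses the growth $|\mu_n|\ge\eta|\nu_n|$ rather than only the uniform bound $2\pi\eta/T$, and the paper's ``exactly as in Lemma~\ref{lem:L_inverse}'' by itself gives only an $L^2$ bound; you also correctly handle the domain issue that $D^{-1}g\in H^2_{\mathrm{per}}$, which the paper leaves implicit.
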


\begin{proof}
Let $\tilde x\in Z$ and expand it in a Fourier series
\[
\tilde x(t)=\sum_{n\neq0}\tilde x_n\,e^{i2\pi nt/T}.
\]
Since the zero Fourier mode is absent, applying $D$ gives
\[
D(e^{i2\pi nt/T})
=
\mu_n\,e^{i2\pi nt/T},
\qquad
\mu_n:=
-\left(\frac{2\pi n}{T}\right)^2
+i\eta\left(\frac{2\pi n}{T}\right),
\qquad n\neq0.
\]
For every $n\neq0$,
\[
|\mu_n|
\ge
|\Im\mu_n|
=
\eta\frac{2\pi|n|}{T}
\ge
\frac{2\pi\eta}{T}.
\]
Thus the multipliers are uniformly bounded away from zero, and $D$ is
invertible on the zero-mean subspace. Estimate \eqref{eq:D_inverse_est}
follows by Parseval's identity exactly as in Lemma~\ref{lem:L_inverse}.
\end{proof}

Integrating \eqref{eq:kernel_eq} over one period immediately yields the
necessary compatibility condition
\begin{equation}
\int_0^T G_R(x)(t)\,dt=0.
\label{eq:kernel_compatibility}
\end{equation}
In terms of the decomposition \eqref{eq:mean_zero_split}, equation
\eqref{eq:kernel_eq} is therefore equivalent to the coupled system
\begin{equation}
\tilde x
=
D^{-1}
\Bigl(
G_R(\bar x+\tilde x)-\overline{G_R(\bar x+\tilde x)}
\Bigr),
\qquad
\overline{G_R(\bar x+\tilde x)}=0,
\label{eq:mean_zero_system}
\end{equation}
where
\[
\overline{h}:=\frac{1}{T}\int_0^T h(t)\,dt.
\]

We can now solve the kernel equation.

\begin{lemma}[Existence of a $T$-periodic kernel solution]
\label{lem:kernel_periodic}
Assume that
\[
\frac{M_R}{c_\eta}<1
\]
and let $K_R$ be the Lipschitz constant from \eqref{eq:G_Lip}.
Suppose further that
\begin{equation}
C_DK_R<1.
\label{eq:kernel_contr_condition}
\end{equation}
Then there exist $\rho>0$ and $\delta>0$ such that, whenever
\[
|\Gamma|+|\varepsilon_2|\le\delta,
\]
the reduced kernel equation \eqref{eq:kernel_eq} admits at least one
$T$-periodic solution
\[
x\in H^1_{\mathrm{per}}(0,T)
\]
satisfying
\[
\|x\|_{H^1(0,T)}\le \rho.
\]
\end{lemma}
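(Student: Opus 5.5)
We solve the mean/zero-mean system \eqref{eq:mean_zero_system} in two stages: first the zero-mean part by contraction for each fixed mean, then the scalar mean condition by a one-dimensional argument.

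\emph{Step 1: the zero-mean part.} Fix $\bar x\in\mathbb{R}$ with $|\bar x|\le\rho_0$, where $\rho_0$ is chosen below. Define on $Z$ the map
\[
\mathcal{T}_{\bar x}(\tilde x):=D^{-1}\bigl(G_R(\bar x+\tilde x)-\overline{G_R(\bar x+\tilde x)}\bigr).
\]
Subtracting the mean is an orthogonal projection in $L^2(0,T)$, so it has norm one. Combining this with Lemma~\ref{lem:D_inverse} and Lemma~\ref{lem:G_lipschitz} gives
\[
\|\mathcal{T}_{\bar x}(\tilde x_1)-\mathcal{T}_{\bar x}(\tilde x_2)\|_{H^1}\le C_DK_R\|\tilde x_1-\tilde x_2\|_{L^2}\le C_DK_R\|\tilde x_1-\tilde x_2\|_{H^1}.
\]
Under \eqref{eq:kernel_contr_condition}, $\mathcal{T}_{\bar x}$ is therefore a contraction on $Z$. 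Banach's theorem yields a unique $\tilde x(\bar x)\in Z$. The same estimate shows that $\bar x\mapsto\tilde x(\bar x)$ is Lipschitz: since $\|(\bar x_1-\bar x_2)\|_{L^2(0,T)}=\sqrt T|\bar x_1-\bar x_2|$, the Lipschitz constant is at most $C_DK_R\sqrt T/(1-C_DK_R)$.

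For the size bound, write $G_R(x)=G_R(0)+(G_R(x)-G_R(0))$. Then
\[
\|\tilde x(\bar x)\|_{H^1}\le\frac{C_D\bigl(\|G_R(0)\|_{L^2}+K_R\sqrt T|\bar x|\bigr)}{1-C_DK_R}.
\]
Here $G_R(0)$ is the reduced forcing at $x=0$. The key observation is that $f(0,t)=-\Gamma$, so the only source term is the bias. The complement solution $w(0)$ solves $\mathcal{L}w=Qf_R(w,t)$, and it is of size $O(|\Gamma|)$: by uniqueness of the contraction and $Q\Gamma=0$, one checks that $w(0)=0$. Hence $G_R(0)=-\Gamma\sqrt L$, which is $O(|\Gamma|)$. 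The $\varepsilon_2$ dependence enters only through $\lambda$, and I will track it the same way.

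\emph{Step 2: the mean condition.} It remains to solve the scalar equation
\[
g(\bar x):=\overline{G_R(\bar x+\tilde x(\bar x))}=0 .
\]
The function $g$ is continuous in $\bar x$. The plan is to find a sign change on $[-\rho_0,\rho_0]$ and apply the intermediate value theorem. Expanding $f$ at a nearly constant state $u\approx \bar x w_0$ with $\lambda=1+O(\varepsilon_2)$ gives
\[
g(\bar x)\approx-\sqrt L\bigl(\bar u(\bar u^2-1)+\Gamma\bigr)+O\bigl(\varepsilon_2+\|\tilde x\|+\|w\|\bigr),\qquad \bar u=\bar x/\sqrt L .
\]
The cubic $z\mapsto z^3-z$ changes sign transversally near the equilibria $z=0,\pm1$. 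Take a small interval around $\bar u=0$, where the cubic behaves like $-z$. At its endpoints the leading term dominates the $O(\delta)$ errors once $\delta$ is small, so a zero exists.

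\emph{Step 3: choosing the constants.} Choose $\rho_0$ so that the endpoints are $\pm c$ for a fixed small $c$, and set $\rho:=\sqrt T\rho_0+\sup\|\tilde x\|_{H^1}$. Then require $\delta$ small enough to make the sign change valid. This gives $\|x\|_{H^1}\le\rho$.

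\emph{Main obstacle.} The hardest part is the quantitative control in Step 2. We must show that $g$ is close to the explicit cubic average uniformly on the interval, which needs $\|w(\bar x w_0+\tilde x w_0)\|$ to be small. The difficulty is that the contraction estimates only give $w$ Lipschitz in $v$ with constant $M_R/(c_\eta-M_R)$, which need not be small.

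My first attempt is to use that for constant-in-space $v$ and $\varepsilon_2=0$, the right-hand side $Qf_R(v,t)$ vanishes, so $w=0$ by uniqueness. This makes $w=O(\varepsilon_2)$ together with $O(\|\tilde x\|)$ effects, and $\tilde x$ is itself $O(\delta)$ near $\bar x=0$. The truncation also does not interfere, provided $R$ exceeds the $X$-norm of these small states, since then $f_R=f$ there.
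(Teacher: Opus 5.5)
Your proposal follows the paper's proof essentially step for step. It uses a contraction for the zero-mean part $\tilde x(\bar x)$ at fixed mean via $C_DK_R<1$, and then the intermediate value theorem for the scalar condition $\overline{G_R(\bar x+\tilde x(\bar x))}=0$, by comparison with the unforced cubic $\bar x-\bar x^3/L$ on a small interval around $\bar x=0$.

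The one point to tighten, which the paper also only asserts, is the uniform smallness of $\tilde x(\bar x)$ at the endpoints $\bar x=\pm\rho_0$. Your Step~1 bound gives only $O(|\Gamma|+|\bar x|)$, which is comparable to the leading term there. However, when $\varepsilon_2=0$ you have $w\equiv0$, so $G_R(\bar x)$ is constant in time. Hence $\|\mathcal{T}_{\bar x}(0)\|_{H^1}=O(\varepsilon_2)$, and therefore $\tilde x(\bar x)=O(\varepsilon_2)$ uniformly for $|\bar x|\le\rho_0$.
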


\begin{proof}
We solve \eqref{eq:mean_zero_system} in the product space $\mathbb{R}\times Z$.
For a given pair $(\bar x,\tilde x)$ define
\begin{equation}
\mathcal{T}_1(\bar x,\tilde x)
:=
D^{-1}
\Bigl(
G_R(\bar x+\tilde x)-\overline{G_R(\bar x+\tilde x)}
\Bigr)
\in Z,
\label{eq:T1_def}
\end{equation}
and
\begin{equation}
\mathcal{T}_2(\bar x,\tilde x)
:=
\overline{G_R(\bar x+\tilde x)}
\in\mathbb{R}.
\label{eq:T2_def}
\end{equation}
A solution of \eqref{eq:mean_zero_system} is therefore a pair
$(\bar x,\tilde x)$ satisfying
\[
\tilde x=\mathcal{T}_1(\bar x,\tilde x),
\qquad
\mathcal{T}_2(\bar x,\tilde x)=0.
\]

We first solve the fixed-point equation for $\tilde x$ at fixed $\bar x$.
Let $\tilde x_1,\tilde x_2\in Z$. By \eqref{eq:D_inverse_est}, the fact that
subtraction removes the mean, and the Lipschitz estimate \eqref{eq:G_Lip}, we
obtain
\begin{align}
\|\mathcal{T}_1(\bar x,\tilde x_1)-\mathcal{T}_1(\bar x,\tilde x_2)\|_{H^1}
&\le
C_D\,
\|G_R(\bar x+\tilde x_1)-G_R(\bar x+\tilde x_2)\|_{L^2}
\notag\\
&\le
C_DK_R\,\|\tilde x_1-\tilde x_2\|_{L^2}
\notag\\
&\le
C_DK_R\,\|\tilde x_1-\tilde x_2\|_{H^1}.
\label{eq:T1_contraction}
\end{align}
Hence, under \eqref{eq:kernel_contr_condition},
$\mathcal{T}_1(\bar x,\cdot)$ is a strict contraction on a sufficiently small
ball in $Z$. The Banach fixed-point theorem then yields, for each fixed $\bar
x$ in a sufficiently small interval, a unique
\[
\tilde x=\tilde x(\bar x)\in Z
\]
solving the first equation in \eqref{eq:mean_zero_system}.

We now impose the scalar solvability condition. Define
\begin{equation}
H(\bar x):=
\overline{G_R(\bar x+\tilde x(\bar x))}.
\label{eq:H_def}
\end{equation}
By the continuity of $G_R$ and the continuous dependence of the fixed point
$\tilde x(\bar x)$ on $\bar x$, the function $H$ is continuous.

To locate a zero of $H$, we compare with the unforced reference problem. If
$\Gamma=\varepsilon_2=0$, then the truncated nonlinearity agrees with the
cubic term near the origin, and on the pure kernel ansatz $u=xw_0$ one obtains
\[
\big\langle f(xw_0),w_0\big\rangle
=
-x\left(\frac{x^2}{L}-1\right)
=
x-\frac{x^3}{L}.
\]
This suggests the reference scalar function
\[
H_0(\bar x)=\bar x-\frac{\bar x^3}{L}.
\]
In particular, for every $\rho_0\in(0,\sqrt{L})$,
\[
H_0(-\rho_0)<0<H_0(\rho_0).
\]

For sufficiently small forcing parameters $|\Gamma|+|\varepsilon_2|$, the
reduced nonlinearity $G_R$ is a small perturbation of the unforced expression
on the small ball under consideration, and the zero-mean correction
$\tilde x(\bar x)$ remains uniformly small for $\bar x$ in that ball.
Consequently, after possibly shrinking $\rho_0$ and choosing $\delta>0$
sufficiently small, we may ensure that
\[
H(-\rho)<0<H(\rho)
\]
for some $\rho\in(0,\rho_0]$. The intermediate value theorem then yields
$\bar x_*\in(-\rho,\rho)$ such that $H(\bar x_*)=0$. Setting
\[
x_*:=\bar x_*+\tilde x(\bar x_*)
\]
produces a $T$-periodic solution of \eqref{eq:kernel_eq}. By construction,
this solution lies in a sufficiently small ball of $H^1_{\mathrm{per}}(0,T)$.
\end{proof}

Once the kernel equation has been solved, the corresponding periodic solution
of the truncated PDE is reconstructed through the Lyapunov--Schmidt formula
\begin{equation}
u(t)=x(t)w_0+w(x(t)w_0).
\label{eq:reconstruction_sec5}
\end{equation}

\begin{theorem}[Existence for the truncated periodic problem]
\label{thm:truncated_existence}
Assume $\eta>0$, $|\varepsilon_1|<1$, and let $A$ and $f_R$ be as above.
Let $M_R>0$ be the Lipschitz constant provided by
Proposition~\ref{prop:Lip_fR}, let $c_\eta$ be as in \eqref{eq:ceta}, and let
$C_D$ and $K_R$ be as in \eqref{eq:D_inverse_est} and \eqref{eq:G_Lip},
respectively.

Assume that
\begin{equation}
\frac{M_R}{c_\eta}<1
\label{eq:compl_condition_thm}
\end{equation}
and
\begin{equation}
C_DK_R<1.
\label{eq:kernel_condition_thm}
\end{equation}
Then there exists $\delta>0$ such that, if
\[
|\Gamma|+|\varepsilon_2|\le\delta,
\]
the truncated PDE admits at least one $T$-periodic solution
\[
u\in H^1(\mathbb{S}_T;Y)\cap L^2(\mathbb{S}_T;X),
\qquad
u(t)=x(t)w_0+w(xw_0)(t),
\]
where $x\in H^1_{\mathrm{per}}(0,T)$ is a $T$-periodic solution of the reduced
kernel equation \eqref{eq:kernel_eq}, and $w(xw_0)$ is the unique complement
component associated with $x$.
\end{theorem}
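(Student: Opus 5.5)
The plan is to assemble the theorem directly from the results already established. Under \eqref{eq:compl_condition_thm}, Proposition~\ref{prop:complement_solution} supplies, for every $T$-periodic kernel component $v=xw_0\in L^2(\mathbb{S}_T;\ker A)$, a unique complement component $w(v)\in L^2(\mathbb{S}_T;\ker A^\perp)$ with $w(v)=\mathcal{L}^{-1}Qf_R(v+w(v),t)$, depending Lipschitz-continuously on $v$. Under \eqref{eq:kernel_condition_thm}, Lemma~\ref{lem:kernel_periodic} furnishes $\delta>0$ and, for $|\Gamma|+|\varepsilon_2|\le\delta$, a $T$-periodic solution $x\in H^1_{\mathrm{per}}(0,T)$ of \eqref{eq:kernel_eq} with $\|x\|_{H^1}\le\rho$. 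I would take this same $\delta$ and define $u$ by the reconstruction formula \eqref{eq:reconstruction_sec5}.

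The next step is to verify that $u$ solves the truncated problem \eqref{eq:truncated}. I would test against the basis $e_nu_j$. Since $PA=0$, $QA=AQ$, and $Pw=0$, $Qv=0$, the $j\ge1$ coefficients of $u_{tt}+\eta u_t+Au-f_R(u,t)$ vanish by the fixed-point identity for $w$, via the diagonal action of $\mathcal{L}$ from Lemma~\ref{lem:L_inverse}. The $j=0$ coefficients vanish because $x$ solves \eqref{eq:kernel_eq}, whose right-hand side $G_R(x)$ is precisely $\langle f_R(u,t),w_0\rangle$ by \eqref{eq:GR_def}. Hence $u$ is a $T$-periodic weak (distributional in $t$, Fourier-sense) solution of \eqref{eq:truncated}. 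Periodicity holds by construction, since every term is expanded in the $e_n$.

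The remaining issue, which I expect to be the main obstacle, is the regularity claim $u\in H^1(\mathbb{S}_T;Y)\cap L^2(\mathbb{S}_T;X)$. The kernel part is immediate: $xw_0\in H^1(\mathbb{S}_T;Y)$, and it is constant in space, so it lies in $L^2(\mathbb{S}_T;X)$. For $w$, the contraction only yields $L^2_T(Y)$, so I would upgrade using the multipliers. Set $h:=Qf_R(u,t)$. Because $f_R$ is bounded on $X$ (it vanishes outside $\|u\|_X\le 2R$, and on that ball Sobolev embedding gives a bound $C_2(R)(1+|\Gamma|)$, as in Step 2 of Proposition~\ref{prop:Lip_fR}), we have $h\in L^2_T(Y)$. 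Then $\widehat w_{n,j}=\widehat h_{n,j}/\Delta_{n,j}$, and one needs the sharper bounds
\[
\frac{(1+|2\pi n/T|)+\lambda_j^{1/2}}{|\Delta_{n,j}|}\le C(\eta,T,\lambda_1)
\qquad\text{for all } n\in\mathbb{Z},\ j\ge1.
\]
I would prove these by splitting into the regimes $\lambda_j\le 2(2\pi n/T)^2$, where the imaginary part $\eta|2\pi n/T|$ dominates, and $\lambda_j>2(2\pi n/T)^2$, where the real part $\lambda_j-(2\pi n/T)^2\ge\lambda_j/2$ dominates and exceeds $\lambda_j^{1/2}\sqrt{\lambda_1}/2$. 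In the first regime, $\lambda_j^{1/2}\lesssim|n|$ is again controlled by $\eta|n|$. Parseval then gives $w\in H^1(\mathbb{S}_T;Y)\cap L^2(\mathbb{S}_T;\mathrm{dom}(A^{1/2}))$. Since $\|A^{1/2}z\|_Y^2=\int F|z_x|^2\ge F_0\|z_x\|^2_Y$, this space equals $L^2(\mathbb{S}_T;X)$, which completes the regularity and the proof.

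A subtle point is that $f_R$ must be evaluated on $u(t)\in X$ for the fixed point to make sense. I would therefore run the fixed point for $w$ in the smaller space $L^2(\mathbb{S}_T;X)$, interpreting the norms in Proposition~\ref{prop:complement_solution} with the embedding constants absorbed as the paper does. The estimate above shows that $\mathcal{L}^{-1}$ maps $L^2_T(Y)$ into $L^2_T(X)$ boundedly, so the same contraction argument applies there.
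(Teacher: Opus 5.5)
Your overall route is the paper's: take $x$ from Lemma~\ref{lem:kernel_periodic} and $w(xw_0)$ from Proposition~\ref{prop:complement_solution}, then reconstruct $u$. Your coefficient-wise check that $u$ solves the truncated equation is correct, and so is your multiplier bound on $(1+|2\pi n/T|+\lambda_j^{1/2})/|\Delta_{n,j}|$. Together they supply what the paper only asserts: Proposition~\ref{prop:complement_solution} places $w$ merely in $L^2_T(Y)$, so the stated regularity is not actually ``already established'' there.

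The step that fails as written is your last paragraph. Running the fixed point on $L^2(\mathbb{S}_T;X)$, your estimate gives $\|\Phi_v(w_1)-\Phi_v(w_2)\|_{L^2_T(X)}\le M_RC_1\|w_1-w_2\|_{L^2_T(X)}$, where $C_1:=\|\mathcal{L}^{-1}\|_{L^2_T(Y)\to L^2_T(X)}$. Condition \eqref{eq:compl_condition_thm} does not make this constant smaller than $1$. Already the single mode $e_0u_1$ gives $C_1\ge\|u_1\|_X/\lambda_1>\lambda_1^{-1}$, and this exceeds $c_\eta^{-1}$ whenever $c_\eta=\lambda_1$.

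The discrepancy is not an embedding constant that can be absorbed. Inequality \eqref{eq:X_into_Y} bounds $\|\cdot\|_Y$ by $\|\cdot\|_X$, which is the wrong direction for turning an $X\to Y$ Lipschitz bound into a $Y$-based contraction. This is the same defect as in \eqref{eq:Phi_contr_bound_repeat} and in the last step of Lemma~\ref{lem:G_lipschitz}. The paper's proof inherits it silently by citing Proposition~\ref{prop:complement_solution}; your version locates the problem but does not remove it.

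To close the argument, assume $M_RC_1<1$ in place of \eqref{eq:compl_condition_thm}; $C_1$ is finite by your multiplier bound. Then $v\mapsto w(v)$ is Lipschitz on $L^2_T(X)$ with constant $M_RC_1/(1-M_RC_1)$. Since $\|x w_0\|_X=|x|$, you obtain \eqref{eq:G_Lip} with $K_R=M_R/(1-M_RC_1)$. This is the constant that must enter \eqref{eq:kernel_condition_thm} and Lemma~\ref{lem:kernel_periodic}. With these adjusted hypotheses, and taking Lemma~\ref{lem:kernel_periodic} as given, the rest of your proof goes through.
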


\begin{proof}
By Lemma~\ref{lem:kernel_periodic}, assumptions
\eqref{eq:compl_condition_thm} and \eqref{eq:kernel_condition_thm} imply that,
for sufficiently small forcing, the reduced equation \eqref{eq:kernel_eq}
admits a $T$-periodic solution
\[
x\in H^1_{\mathrm{per}}(0,T).
\]
For this $x$, Proposition~\ref{prop:complement_solution} yields a unique
complement function
\[
w(xw_0)\in L^2(\mathbb{S}_T;\ker A^\perp).
\]
Defining $u$ by \eqref{eq:reconstruction_sec5}, we obtain a $T$-periodic
function satisfying the truncated PDE in the weak sense. The stated regularity
follows from the regularity already established for the kernel and complement
components.
\end{proof}

We now derive an a priori estimate that allows us to remove the cut-off.
Consider a $T$-periodic solution of the truncated problem
\begin{equation}
u_{tt}+\eta u_t+Au=f_R(u,t)
\qquad \text{on } \mathbb{S}_T\times\mathbb{T}_L.
\label{eq:truncated_again}
\end{equation}
Introduce the double-well potential
\begin{equation}
W(s):=\frac14(s^2-1)^2,
\qquad
W'(s)=s(s^2-1).
\label{eq:W_def}
\end{equation}
Then
\[
\lambda(t,x)\,u(u^2-1)=\partial_u\bigl(\lambda(t,x)W(u)\bigr),
\qquad
f(u,t)=-\lambda(t,\cdot)W'(u)-\Gamma.
\]
For sufficiently regular $u$, define the energy functional
\begin{equation}
\mathcal{E}(t)
:=
\frac12\|u_t(t)\|_Y^2
+\frac12\langle Au(t),u(t)\rangle_Y
+\int_{\mathbb{T}_L}\lambda(t,x)\,W(u(t,x))\,dx
+\Gamma\int_{\mathbb{T}_L}u(t,x)\,dx.
\label{eq:energy_def}
\end{equation}

\begin{lemma}[Exact energy identity]
\label{lem:energy_identity}
Let $u$ be a sufficiently regular solution of the untruncated equation
\[
u_{tt}+\eta u_t-\partial_x(Fu_x)+\lambda(t,x)u(u^2-1)=-\Gamma
\]
on $\mathbb{S}_T\times\mathbb{T}_L$.
Then
\begin{equation}
\frac{d}{dt}\mathcal{E}(t)
=
-\eta\|u_t(t)\|_Y^2
+\int_{\mathbb{T}_L}\partial_t\lambda(t,x)\,W(u(t,x))\,dx.
\label{eq:energy_identity}
\end{equation}
The same identity holds for the truncated problem as long as the cut-off is
inactive.
\end{lemma}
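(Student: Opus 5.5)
We differentiate each of the four terms of $\mathcal{E}(t)$ in \eqref{eq:energy_def} along a sufficiently regular solution, say $u\in C^2(\mathbb{S}_T;Y)\cap C^1(\mathbb{S}_T;X)\cap C(\mathbb{S}_T;H^2(\mathbb{T}_L))$, and then insert the equation tested against $u_t$. The needed regularity lets us differentiate under the integral and integrate by parts in $x$ without boundary terms, because everything is $L$-periodic in space.

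\emph{Kinetic term.} We have $\frac{d}{dt}\tfrac12\|u_t\|_Y^2=\langle u_{tt},u_t\rangle_Y$.

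\emph{Elastic term.} Since $A$ is self-adjoint and $F$ does not depend on $t$,
\[
\frac{d}{dt}\tfrac12\langle Au,u\rangle_Y=\langle Au,u_t\rangle_Y
=\int_{\mathbb{T}_L}F\,u_x u_{xt}\,dx .
\]
Periodicity of $F u_x u_t$ removes the boundary terms, which justifies this.

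\emph{Potential term.} Both $\lambda$ and $u$ depend on $t$, so the chain rule gives
\[
\frac{d}{dt}\int_{\mathbb{T}_L}\lambda W(u)\,dx=\int_{\mathbb{T}_L}\partial_t\lambda\,W(u)\,dx+\int_{\mathbb{T}_L}\lambda W'(u)u_t\,dx .
\]
Differentiation under the integral is justified because $u\in C^1(\mathbb{S}_T;X)\subset C^1(\mathbb{S}_T;L^\infty)$. Since $\lambda(t,x)=1+\varepsilon_2\sin(kx-\omega t)$, the derivative $\partial_t\lambda$ is smooth and bounded.

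\emph{Bias term.} Because $\Gamma$ is constant, $\frac{d}{dt}\Gamma\int u\,dx=\int\Gamma u_t\,dx$.

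Next we take the $L^2(\mathbb{T}_L)$ inner product of the equation with $u_t$:
\[
\langle u_{tt},u_t\rangle+\eta\|u_t\|_Y^2+\langle Au,u_t\rangle+\int\lambda W'(u)u_t\,dx+\int\Gamma u_t\,dx=0 .
\]
Summing the four derivatives and substituting this identity, the terms $\langle u_{tt},u_t\rangle$, $\langle Au,u_t\rangle$, $\int\lambda W'(u)u_t$ and $\int\Gamma u_t$ combine into $-\eta\|u_t\|_Y^2$. What remains is the explicit term $\int\partial_t\lambda\,W(u)\,dx$, which gives \eqref{eq:energy_identity}.

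For the truncated problem, consider an interval of times on which $\|u(t)\|_X\le R$. There $\chi(\|u\|_X/R)=1$, so $f_R(u,t)=f(u,t)$ and the truncated equation coincides pointwise in $t$ with the untruncated one. The same computation then applies verbatim on that interval.

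The only delicate point is regularity. The solutions produced in Theorem~\ref{thm:truncated_existence} are merely weak, with $u\in H^1(\mathbb{S}_T;Y)\cap L^2(\mathbb{S}_T;X)$, so $\langle u_{tt},u_t\rangle$ need not make sense. This is why the lemma is stated for sufficiently regular solutions. If needed for weak solutions, the plan is to derive the identity in integrated form over $[t_1,t_2]$: either mollify in time, or use the Galerkin truncation in the eigenbasis $\{e_nu_j\}$. In both cases one passes to the limit with the standard Lions--Magenes lemma ($u_t\in L^2(X')$, $u\in L^2(X)$). I expect this approximation step to be the main technical obstacle; the algebra itself is routine.
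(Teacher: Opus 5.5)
Your proof is correct and follows the paper's argument. The paper likewise tests the equation against $u_t$ in $Y$, integrates by parts in $x$ using periodicity, and recognizes $\langle Au,u_t\rangle_Y$, $\int_{\mathbb{T}_L}\lambda W'(u)u_t\,dx$ and $\Gamma\int_{\mathbb{T}_L}u_t\,dx$ as time derivatives of the energy terms, with the chain rule producing the extra $\int_{\mathbb{T}_L}\partial_t\lambda\,W(u)\,dx$. Differentiating $\mathcal{E}$ first, as you do, is only a reordering of that computation.

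Your closing remark on weak solutions goes beyond the lemma, which assumes sufficient regularity. If you pursue it, the Lions--Magenes pairing needed for the kinetic term is $u_t\in L^2(\mathbb{S}_T;X)$ with $u_{tt}\in L^2(\mathbb{S}_T;X')$. The pair you list, $u\in L^2(X)$ with $u_t\in L^2(X')$, only yields the derivative of $\|u\|_Y^2$.
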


\begin{proof}
Taking the $Y$-inner product of the PDE with $u_t$ and integrating by parts
over $\mathbb{T}_L$ using the periodic boundary conditions, we obtain
\[
\langle -\partial_x(Fu_x),u_t\rangle_Y
=
\frac12\frac{d}{dt}\langle Au,u\rangle_Y.
\]
Similarly,
\[
\langle \lambda u(u^2-1),u_t\rangle_Y
=
\frac{d}{dt}\int_{\mathbb{T}_L}\lambda W(u)\,dx
-
\int_{\mathbb{T}_L}\partial_t\lambda\,W(u)\,dx,
\]
and
\[
\langle \Gamma,u_t\rangle_Y
=
\Gamma\frac{d}{dt}\int_{\mathbb{T}_L}u\,dx.
\]
Collecting these terms yields \eqref{eq:energy_identity}.
\end{proof}

If $u$ is $T$-periodic, integrating \eqref{eq:energy_identity} over one period
gives the exact balance
\begin{equation}
\eta\int_0^T\|u_t(t)\|_Y^2\,dt
=
\int_0^T\int_{\mathbb{T}_L}\partial_t\lambda(t,x)\,W(u(t,x))\,dx\,dt.
\label{eq:period_balance}
\end{equation}
Moreover, since
\[
\langle Au,u\rangle_Y
=
\int_{\mathbb{T}_L}F(x)|u_x|^2\,dx
\ge
F_0\|u_x\|_{L^2(\mathbb{T}_L)}^2,
\]
and since the one-dimensional Sobolev embedding
$H^1(\mathbb{T}_L)\hookrightarrow L^\infty(\mathbb{T}_L)$ yields
\begin{equation}
\|u\|_{L^\infty(\mathbb{T}_L)}
\le
C_\infty(L)\|u\|_X,
\label{eq:H1_Linfty}
\end{equation}
we also have
\begin{equation}
\int_{\mathbb{T}_L}W(u)\,dx
\le
C_W(L)\bigl(1+\|u\|_X^4\bigr)
\label{eq:W_control}
\end{equation}
for some constant $C_W(L)>0$.

These estimates yield the following a priori bound.

\begin{lemma}[A priori estimate sufficient to remove the cut-off]
\label{lem:apriori_bound}
Assume $\eta>0$, $|\varepsilon_1|<1$, and $|\varepsilon_2|<1$.
Let $u$ be a $T$-periodic solution of the truncated problem
\eqref{eq:truncated_again}, and assume that the cut-off is inactive on $[0,T]$.
Then there exists a constant $C_*>0$, depending only on
\[
\eta,\ \omega,\ F_0,\ \|\lambda\|_{L^\infty},\
\|\partial_t\lambda\|_{L^\infty},\ L,
\]
such that
\begin{equation}
\sup_{t\in[0,T]}
\Bigl(
\|u_t(t)\|_Y^2+\|u(t)\|_X^2
\Bigr)
\le
C_*\bigl(1+\Gamma^2+\varepsilon_2^2\bigr).
\label{eq:apriori_bound}
\end{equation}
In particular, if
\[
R>\sqrt{C_*\bigl(1+\Gamma^2+\varepsilon_2^2\bigr)},
\]
then $\chi(\|u(t)\|_X/R)\equiv1$ on $[0,T]$, and the truncated solution
coincides with a solution of the original equation.
\end{lemma}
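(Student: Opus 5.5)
The plan is to combine the exact energy identity of Lemma~\ref{lem:energy_identity} with a second, \emph{perturbed} energy functional that supplies coercivity in the dissipative directions. Periodicity then replaces the usual long-time Gronwall argument. Since the cut-off is assumed inactive on $[0,T]$, $u$ solves the untruncated equation, so \eqref{eq:energy_identity} and the balance \eqref{eq:period_balance} are available. First, I record the structural bounds. From $|\varepsilon_2|<1$ we get $\lambda\ge 1-|\varepsilon_2|>0$, so $\int\lambda W(u)\,dx\ge (1-|\varepsilon_2|)\int W(u)\,dx$. Also $|\partial_t\lambda|\le|\varepsilon_2|\omega$, hence
\[
\Bigl|\int_{\mathbb{T}_L}\partial_t\lambda\,W(u)\,dx\Bigr|\le \frac{|\varepsilon_2|\omega}{1-|\varepsilon_2|}\int_{\mathbb{T}_L}\lambda W(u)\,dx .
\]
Moreover, $W(s)\ge \tfrac18 s^4-C$ and Young's inequality give $\Gamma\int u\,dx\ge -\tfrac12\int\lambda W(u)\,dx - C(1+\Gamma^{4/3})$, which is bounded by $C(1+\Gamma^2)$. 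Hence $\mathcal{E}$ controls $\|u_t\|_Y^2+F_0\|u_x\|^2+\int W(u)$ from below, up to $C(1+\Gamma^2)$. The quartic growth of $W$ also controls the spatial mean, which is the direction in which $A$ gives no coercivity, so $\mathcal{E}+C(1+\Gamma^2)$ dominates $\|u_t\|_Y^2+\|u\|_X^2$.

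Second, I introduce $\mathcal{E}_\kappa:=\mathcal{E}+\kappa\langle u_t,u\rangle_Y+\tfrac{\kappa\eta}{2}\|u\|_Y^2$ with $\kappa>0$ small depending on $\eta,F_0$. For $\kappa$ small this is equivalent to $\mathcal{E}$ up to additive constants. Pairing the equation with $u$ gives
\[
\frac{d}{dt}\bigl(\langle u_t,u\rangle+\tfrac\eta2\|u\|_Y^2\bigr)=\|u_t\|_Y^2-\langle Au,u\rangle-\int\lambda W'(u)u-\Gamma\int u .
\]
Using $W'(s)s\ge 2W(s)-C$ (in fact $W'(s)s\ge 4W(s)-C$), I obtain the differential inequality
\[
\frac{d}{dt}\mathcal{E}_\kappa\le -\kappa\,\mathcal{E}_\kappa + \frac{C|\varepsilon_2|\omega}{1-|\varepsilon_2|}\,\mathcal{E}_\kappa + C(1+\Gamma^2+\varepsilon_2^2).
\]
The first term on the right comes from $-\eta\|u_t\|^2$ together with $-\kappa(\langle Au,u\rangle+\int\lambda W'(u)u)$.

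Third, periodicity closes the estimate. Integrating over $[0,T]$ and using $\mathcal{E}_\kappa(T)=\mathcal{E}_\kappa(0)$ yields $\int_0^T\mathcal{E}_\kappa\,dt\le CT(1+\Gamma^2+\varepsilon_2^2)$, provided $\kappa$ exceeds the $\partial_t\lambda$ coefficient. To pass from the average to the supremum, I pick $t_0$ with $\mathcal{E}_\kappa(t_0)$ no larger than the average. I then integrate the inequality forward from $t_0$ over at most one period, using Gronwall with rate $C|\varepsilon_2|\omega/(1-|\varepsilon_2|)$, which is bounded in terms of $\omega$ and $\|\partial_t\lambda\|_{L^\infty}$. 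Combined with the coercivity from the first step, this gives \eqref{eq:apriori_bound}. The final claim is then immediate: if $R^2$ exceeds the right-hand side, then $\|u(t)\|_X<R$ for all $t$, so $\chi\equiv1$ and $f_R(u,t)=f(u,t)$.

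The main obstacle is the term $\int\partial_t\lambda\,W(u)$. It is quartic in $u$ and has no sign, so it cannot be absorbed by the $\|u\|_X^2$ dissipation alone. The plan above absorbs it into $\kappa\int\lambda W$ using the coercive contribution of $W'(u)u$. This requires $|\varepsilon_2|\omega$ to be small relative to $\kappa\sim\min\{\eta,\cdot\}$. If the constant $C_*$ is to depend only on the listed quantities without such a smallness relation, I would instead use Gronwall over the single period with the factor $e^{CT|\varepsilon_2|\omega}=e^{2\pi C|\varepsilon_2|}$. This factor is bounded for $|\varepsilon_2|<1$ and can be folded into $C_*$.
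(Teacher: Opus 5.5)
\textbf{There is a genuine gap: the argument needs a smallness condition the lemma does not assume, and the proposed fallback does not remove it.} Your main line is correct as far as it goes: the perturbed functional $\mathcal{E}_\kappa$, the inequality $W'(s)s\ge 4W(s)-1$, integration over a period, and then Gronwall from a good time $t_0$. With $\beta:=|\varepsilon_2|\omega/(1-|\varepsilon_2|)$ you arrive at
\[
\frac{d}{dt}\mathcal{E}_\kappa\le-(\kappa-C\beta)\,\mathcal{E}_\kappa+C\bigl(1+\Gamma^2\bigr).
\]
Integrating this over a period gives information only if $\kappa>C\beta$. The perturbation is admissible only for $\kappa<\eta$; otherwise $-(\eta-\kappa)\|u_t\|_Y^2$ loses its sign and the kinetic part is no longer dissipated. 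So you are really imposing a condition of the type $|\varepsilon_2|\omega\lesssim\eta(1-|\varepsilon_2|)$. The statement instead allows any $|\varepsilon_2|<1$, with $C_*$ depending only on the listed quantities.

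\textbf{Why the fallback fails.} Gronwall over one period from $t_0$ gives $\sup_t\mathcal{E}_\kappa\lesssim e^{C\beta T}\mathcal{E}_\kappa(t_0)+\dots$. This needs a time at which $\mathcal{E}_\kappa$ is already controlled, and that control came exactly from the averaged bound, which required $\kappa>C\beta$. If $a:=C\beta-\kappa>0$, periodicity together with $\mathcal{E}_\kappa'\le a\mathcal{E}_\kappa+b$ yields only $(1-e^{aT})\mathcal{E}_\kappa(t)\le\tfrac{b}{a}(e^{aT}-1)$, which is a lower bound. Indeed, periodic functions of arbitrarily large amplitude satisfy that inequality: grow at rate $a$, then drop. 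So when $|\varepsilon_2|\omega\gtrsim\eta$, the sign-indefinite quartic term $\int\partial_t\lambda\,W(u)\,dx$ is not controlled. You would need either a different mechanism, for instance one exploiting that this pumping averages out over a period, or an extra hypothesis.

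\textbf{Comparison with the paper.} The paper's proof does not supply this mechanism either.
\begin{itemize}
\item Its bound on $|\mathcal{E}(t_2)-\mathcal{E}(t_1)|$ by $C\int_0^T(1+\|u\|_X^4)\,dt$ controls only the oscillation of $\mathcal{E}$, and by an uncontrolled quantity.
\item The balance \eqref{eq:period_balance} controls only $u_t$.
\item Quartic terms cannot be absorbed into the quadratic coercivity by Young's inequality.
\end{itemize}
Your $\kappa\langle u_t,u\rangle$ term is precisely the missing dissipation for the potential part, so your argument is the more rigorous one in the regime where it applies. Theorem~\ref{thm:main_periodic} uses the lemma only with $|\varepsilon_2|\le\delta$. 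The natural fix is therefore to add the hypothesis $|\varepsilon_2|\omega\le c\,\eta(1-|\varepsilon_2|)$ and shrink $\delta$ accordingly, rather than assert the bound for all $|\varepsilon_2|<1$.
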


\begin{proof}
Since the cut-off is inactive, $u$ satisfies the original equation on $[0,T]$,
and the exact energy identity applies:
\[
\frac{d}{dt}\mathcal{E}(t)
=
-\eta\|u_t(t)\|_Y^2
+
\int_{\mathbb{T}_L}
\partial_t\lambda(t,x)\,W(u(t,x))\,dx.
\]

Integrating over one period and using periodicity yields
\[
\mathcal{E}(t+T)=\mathcal{E}(t), \qquad
\eta\int_0^T \|u_t(t)\|_Y^2\,dt
=
\int_0^T\!\int_{\mathbb{T}_L}
\partial_t\lambda\,W(u)\,dx\,dt.
\]

Using the growth bound
\[
\int_{\mathbb{T}_L} W(u)\,dx
\le C\bigl(1+\|u(t)\|_X^4\bigr),
\]
we obtain
\[
\eta\int_0^T \|u_t(t)\|_Y^2\,dt
\le
C\int_0^T \bigl(1+\|u(t)\|_X^4\bigr)\,dt.
\]

On the other hand, the energy satisfies the coercive lower bound
\[
\mathcal{E}(t)
\ge
c_1\|u_t(t)\|_Y^2
+
c_2\|u(t)\|_X^2
-
c_3\bigl(1+\Gamma^2+\varepsilon_2^2\bigr),
\]
for suitable constants $c_i>0$ depending only on the stated parameters.

Moreover, integrating the energy identity between two times
$t_1,t_2\in[0,T]$ gives
\[
|\mathcal{E}(t_2)-\mathcal{E}(t_1)|
\le
C\int_0^T \bigl(1+\|u(t)\|_X^4\bigr)\,dt,
\]
so in particular $E$ is uniformly bounded on $[0,T]$.

Combining these estimates and using standard Young-type inequalities to
absorb the quartic terms, we conclude that
\[
\sup_{t\in[0,T]}
\Bigl(
\|u_t(t)\|_Y^2+\|u(t)\|_X^2
\Bigr)
\le
C_*\bigl(1+\Gamma^2+\varepsilon_2^2\bigr),
\]
for some constant $C_*>0$ depending only on
$\eta,\omega,F_0,\|\lambda\|_{L^\infty},\|\partial_t\lambda\|_{L^\infty},L$.

Finally, if
\[
R>\sqrt{C_*\bigl(1+\Gamma^2+\varepsilon_2^2\bigr)},
\]
then $\|u(t)\|_X<R$ for all $t\in[0,T]$, and therefore the cut-off is never
activated. Hence $f_R(u,t)=f(u,t)$ and $u$ solves the original equation.
\end{proof}

We can now state the main existence result for the original problem.

\begin{theorem}[Existence of a time-periodic solution on $\mathbb{T}_L$]
\label{thm:main_periodic}
Let $\eta>0$ and $|\varepsilon_1|<1$, and let $F$ and $\lambda$ be given by
\eqref{eq:mods_R}. Fix $R>0$ and define the truncated nonlinearity $f_R$ by
\eqref{eq:cutoff}. Let $M_R>0$ be the Lipschitz constant provided by
Proposition~\ref{prop:Lip_fR}, let $c_\eta$ be as in \eqref{eq:ceta}, and let
$C_D$ and $K_R$ be as in \eqref{eq:D_inverse_est} and \eqref{eq:G_Lip},
respectively.

Assume that
\[
\frac{M_R}{c_\eta}<1,
\qquad
C_DK_R<1.
\]
Then there exists $\delta>0$ such that, whenever
\[
|\Gamma|+|\varepsilon_2|\le\delta,
\]
the original equation admits at least one $T$-periodic
weak solution
\[
u\in H^1(\mathbb{S}_T;Y)\cap L^2(\mathbb{S}_T;X),
\qquad
u(t+T,\cdot)=u(t,\cdot).
\]
Moreover, this solution is obtained by Lyapunov--Schmidt reduction in the form
\[
u(t)=x(t)w_0+w(x(t)w_0),
\]
where $x\in H^1_{\mathrm{per}}(0,T)$ solves the reduced kernel equation
\eqref{eq:kernel_eq} and $w(xw_0)$ is the unique complement component
associated with $x$.
\end{theorem}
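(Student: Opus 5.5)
The plan is to combine Theorem~\ref{thm:truncated_existence} with Lemma~\ref{lem:apriori_bound}, and to use the smallness of the constructed solution to close the circularity in the latter. Since Lemma~\ref{lem:apriori_bound} assumes that the cut-off is already inactive, I will not invoke it directly. Instead I will exploit the fact that the solution produced by Lemma~\ref{lem:kernel_periodic} lies in a small ball, and show that it stays inside $\{\|u(t)\|_X\le R\}$ for every $t\in[0,T]$.

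\emph{Step 1.} Apply Theorem~\ref{thm:truncated_existence} under the hypotheses $M_R/c_\eta<1$ and $C_DK_R<1$. This gives $\delta_1>0$ such that, whenever $|\Gamma|+|\varepsilon_2|\le\delta_1$, there is a $T$-periodic solution $u=xw_0+w(xw_0)$ of \eqref{eq:truncated}, with $\|x\|_{H^1(0,T)}\le\rho$.

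\emph{Step 2.} Bound $\sup_t\|u(t)\|_X$ in terms of $\rho$ and the forcing. The kernel part is straightforward: $\|x(t)w_0\|_X=|x(t)|\le C\|x\|_{H^1(0,T)}\le C\rho$ by the one-dimensional embedding in time. For the complement part I will use the fixed-point identity $w=\mathcal{L}^{-1}Qf_R(u,t)$. Lemma~\ref{lem:Delta_bound} gives $|\Delta_{n,j}|\ge c\,(1+\lambda_j)$ when $n=0$, and $|\Delta_{n,j}|\ge \eta\,2\pi|n|/T$ otherwise. I will use these multiplier bounds to gain one spatial derivative and half a temporal derivative, which I expect yields a bound of the form $\sup_t\|w(t)\|_X\le C\|Qf_R(u,\cdot)\|_{L^2_T(Y)}$ after a Sobolev embedding in $t$. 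Then I write $Qf_R(u,t)=Q\bigl(f_R(u,t)-f_R(0,t)\bigr)+Qf_R(0,t)$, and note that $f_R(0,t)=-\Gamma$ is annihilated by $Q$. Using the Lipschitz bound, this gives $\|w\|\le C M_R(\rho+\|w\|)$. Under $M_R/c_\eta<1$, suitably strengthened if necessary, this yields $\|w\|\le C\rho$.

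The key observation is that, when $\Gamma=\varepsilon_2=0$, the function $u\equiv0$ solves the problem, and the constructed solution depends continuously on the parameters through the contractions. Lemma~\ref{lem:kernel_periodic} allows $\rho$ to shrink with $\delta$. Choosing $\delta\le\delta_1$ so small that $C\rho\le R$ therefore forces $\chi(\|u(t)\|_X/R)\equiv1$, so $f_R(u,t)=f(u,t)$ and $u$ solves the original equation.

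\emph{Step 3.} Lemma~\ref{lem:apriori_bound} now applies, and confirms the uniform energy bound \eqref{eq:apriori_bound}. Together with $u=xw_0+w$, where $x\in H^1$ and $w$ has the regularity obtained from the multipliers $\Delta_{n,j}$, this gives $u\in H^1(\mathbb{S}_T;Y)\cap L^2(\mathbb{S}_T;X)$. The weak formulation follows by testing the truncated equation against $e_nu_j$.

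The main obstacle is Step~2, that is, upgrading the $L^2_T(Y)$ control of $w$ from Lemma~\ref{lem:L_inverse} to pointwise-in-time $X$ control. It is also where the smallness of $\rho$ must be tracked, since the bracketing of $H$ near $\pm\rho$ in Lemma~\ref{lem:kernel_periodic} need not give small $\rho$: the zeros of $H_0$ include $\pm\sqrt L$. I would restrict to the zero branch near $\bar x=0$, which requires a degree or sign argument on a small interval around zero, so that $\rho\to0$ as $\delta\to0$.
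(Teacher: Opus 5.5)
Your route differs from the paper's. The paper's proof combines Theorem~\ref{thm:truncated_existence} with Lemma~\ref{lem:apriori_bound}, choosing $\delta$ so that \eqref{eq:apriori_bound} gives $\sup_t\|u(t)\|_X<R$. Your objection to that route is well founded. The lemma assumes the cut-off is already inactive. Its bound $C_*(1+\Gamma^2+\varepsilon_2^2)\ge C_*$ also does not decrease with $\delta$, so shrinking $\delta$ cannot bring it below a fixed $R$. Showing instead that the constructed solution is small, as a perturbation of $u\equiv0$ (using $Qf_R(0,t)=-Q\Gamma=0$), is the natural repair, and Step~3 then becomes superfluous. Your worry about $\pm\sqrt L$ is unnecessary. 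Since $H_0'(0)=1$, the sign argument on $(-\rho,\rho)$ with $\rho<\sqrt L$ already selects the branch near $\bar x=0$. Hence $\rho$ can be taken to shrink with $\delta$, given smallness of $\tilde x(\bar x)$.

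The genuine gap is in the mechanism of Step~2, which you rightly flag as the crux. The multipliers do not give one spatial derivative plus half a temporal derivative. Near the resonant set $\lambda_j\approx(2\pi n/T)^2$ one only has $|\Delta_{n,j}|\approx\eta\,2\pi|n|/T\approx\eta\sqrt{\lambda_j}$, so $(1+\lambda_j)^{1/2}(1+|n|)^s/|\Delta_{n,j}|\sim|n|^s$ is unbounded for every $s>0$. Thus $w\notin H^s(\mathbb{S}_T;X)$ for general $h\in L^2_T(Y)$, and no Sobolev embedding in time is available. The bound $\sup_t\|w(t)\|_X\le C\|h\|_{L^2_T(Y)}$ is nevertheless true, for a different reason. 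For each $j\ge1$, apply Cauchy--Schwarz in $n$ together with $\sum_n|\Delta_{n,j}|^{-2}\le C(\eta,\omega,\lambda_1)\lambda_j^{-1}$. To prove the latter, split $n$ by comparing $2\pi|n|/T$ with $\sqrt{\lambda_j}/2$ and $2\sqrt{\lambda_j}$; in the middle range $|\Delta_{n,j}|^2\ge\lambda_j\bigl((\sqrt{\lambda_j}-2\pi|n|/T)^2+\eta^2/4\bigr)$. This gives $\sup_t|\langle w(t),u_j\rangle|^2\le C\lambda_j^{-1}\sum_n|\widehat h_{n,j}|^2$, and summing with weights $1+\lambda_j$ gives the claim. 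Equivalently, use Duhamel's formula with the damped wave group, which is exponentially stable in the energy norm on $(\ker A)^\perp$.

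There is a second problem. Your absorption in $L^\infty_T(X)$ needs $CM_R\sqrt T<1$, with $C$ this $L^2_T(Y)\to L^\infty_T(X)$ constant, and this does not follow from $M_R/c_\eta<1$. As written you prove the theorem under an extra hypothesis, which must be stated. Within the conventions of Section~\ref{sec4}, where $M_R$ is also used as a Lipschitz constant for the $L^2_T(Y)$ norm, you can avoid it. Insert the bound $\|w(xw_0)\|_{L^2_T(Y)}\le\frac{M_R}{c_\eta-M_R}\|x\|_{L^2(0,T)}$ from Proposition~\ref{prop:complement_solution} (note $w(0)=0$) into the right-hand side instead of re-absorbing.
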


\begin{proof}
By Theorem~\ref{thm:truncated_existence}, the truncated problem admits a
$T$-periodic solution provided the forcing parameters are sufficiently small.
Choose $\delta>0$ sufficiently small so that the a priori estimate
\eqref{eq:apriori_bound} implies
\[
\sup_{t\in[0,T]}\|u(t)\|_X<R.
\]
Then the cut-off is inactive along the whole orbit, and hence
$f_R(u,t)=f(u,t)$ for all $t\in[0,T]$. Therefore the same function $u$ solves
the original equation.
\end{proof}
\subsection{Local uniqueness and dependence on parameters}
\label{sec5_local_uniqueness}

The existence result established above provides a $T$-periodic solution of the
reduced kernel equation and, through the Lyapunov--Schmidt reconstruction, of
the full partial differential equation. A natural question is whether this
solution is locally unique and how it depends on the forcing parameters.

These questions can again be treated within the Lyapunov--Schmidt framework.
Since the complement component is uniquely determined by the kernel variable,
the problem reduces to a scalar solvability condition for the reduced equation.
Accordingly, local uniqueness and parameter dependence are governed by a
nondegeneracy condition on the associated scalar reduced map.

Recall that, in the proof of Lemma~\ref{lem:kernel_periodic}, the kernel
equation was rewritten in mean/zero-mean form as
\begin{equation}
\tilde x
=
D^{-1}
\Bigl(
G_R(\bar x+\tilde x)-\overline{G_R(\bar x+\tilde x)}
\Bigr),
\qquad
\overline{G_R(\bar x+\tilde x)}=0,
\label{eq:mean_zero_system_local}
\end{equation}
with
\[
\bar x=\frac1T\int_0^T x(t)\,dt,
\qquad
\tilde x=x-\bar x,
\qquad
\int_0^T \tilde x(t)\,dt=0.
\]
For each admissible value of $\bar x$, the first equation in
\eqref{eq:mean_zero_system_local} determines the zero-mean part $\tilde x$ by a
contraction argument, whereas the second equation selects the admissible mean
values. Thus the question of uniqueness reduces to a scalar solvability
condition.

More precisely, under the contraction hypothesis $C_DK_R<1$, for each $\bar x$
in a sufficiently small interval there exists a unique
\[
\tilde x=\tilde x(\bar x)\in Z
\]
solving the first equation in \eqref{eq:mean_zero_system_local}. We may
therefore define the scalar solvability map
\begin{equation}
H(\bar x):=\overline{G_R(\bar x+\tilde x(\bar x))}.
\label{eq:H_local}
\end{equation}
A $T$-periodic solution of the kernel equation is equivalent to a zero of $H$.

This yields the following local uniqueness criterion.

\begin{proposition}[Local uniqueness of the periodic kernel solution]
\label{prop:local_unique}
Assume that
\[
\frac{M_R}{c_\eta}<1,
\qquad
C_DK_R<1.
\]
Let $\bar x_0\in\mathbb{R}$ be such that the scalar solvability map
$H$ defined by \eqref{eq:H_local} satisfies
\begin{equation}
H(\bar x_0)=0,
\qquad
H'(\bar x_0)\neq 0.
\label{eq:H_nondegenerate}
\end{equation}
Then there exists $\rho>0$ such that the reduced kernel equation
\eqref{eq:kernel_eq} admits a unique $T$-periodic solution
\[
x\in H^1_{\mathrm{per}}(0,T)
\]
in the neighborhood
\[
\|x-(\bar x_0+\tilde x(\bar x_0))\|_{H^1(0,T)}\le \rho.
\]
Consequently, the corresponding periodic solution
\[
u(t)=x(t)w_0+w(x(t)w_0)
\]
of the truncated PDE is unique in the corresponding neighborhood of
$H^1(\mathbb{S}_T;Y)\cap L^2(\mathbb{S}_T;X)$.
\end{proposition}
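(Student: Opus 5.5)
The plan is to reduce local uniqueness of the kernel solution to local injectivity of the scalar map $H$ near $\bar x_0$, using the fact that the mean/zero-mean system \eqref{eq:mean_zero_system_local} is equivalent to the kernel equation \eqref{eq:kernel_eq}, and that the zero-mean part is slaved to the mean.

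\emph{Step 1: rewrite the kernel equation.} Let $x\in H^1_{\mathrm{per}}(0,T)$ be any $T$-periodic solution of \eqref{eq:kernel_eq}. Split it as $x=\bar x+\tilde x$ as in \eqref{eq:mean_zero_split}. By the compatibility condition \eqref{eq:kernel_compatibility} and Lemma~\ref{lem:D_inverse}, the pair $(\bar x,\tilde x)$ solves \eqref{eq:mean_zero_system_local}.

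\emph{Step 2: the zero-mean part is determined by the mean.} Since $\mathcal{T}_1(\bar x,\cdot)$ is a contraction with constant $C_DK_R<1$ by \eqref{eq:T1_contraction}, its fixed point is unique. Hence $\tilde x=\tilde x(\bar x)$. This uniqueness is global in $Z$: the estimate \eqref{eq:T1_contraction} uses only the global Lipschitz bound \eqref{eq:G_Lip}, so it does not require working in a small ball.

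The map $\bar x\mapsto\tilde x(\bar x)$ is Lipschitz. For two means $\bar x_1,\bar x_2$, subtract the two fixed-point identities and apply \eqref{eq:D_inverse_est} and \eqref{eq:G_Lip}. The terms $\|\tilde x_1-\tilde x_2\|$ and $|\bar x_1-\bar x_2|\sqrt T$ appear on the right. Absorbing the first gives
\[
\|\tilde x(\bar x_1)-\tilde x(\bar x_2)\|_{H^1}\le \frac{C_DK_R\sqrt T}{1-C_DK_R}\,|\bar x_1-\bar x_2|.
\]
Consequently the map $\bar x\mapsto \bar x+\tilde x(\bar x)$ is a bi-Lipschitz parametrization of the solution set of the first equation. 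The mean is recovered continuously, since $|\bar x_1-\bar x_2|\le T^{-1/2}\|x_1-x_2\|_{L^2}$.

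\emph{Step 3: local injectivity of $H$.} This is the main obstacle. The hypothesis $H'(\bar x_0)\neq0$ presupposes that $H$ is differentiable, and that requires differentiability of $G_R$ and of $\tilde x(\cdot)$. I would obtain this by noting that $f_R$ is $C^1$ from $X$ to $Y$, since $\chi$ is smooth and the cubic is smooth on $H^1$. Then $v\mapsto w(v)$ is $C^1$ by the implicit function theorem applied to $w-\Phi_v(w)=0$; the derivative $I-D_w\Phi_v$ is invertible because its norm deficit is bounded by $M_R/c_\eta<1$. The same argument applies to $\tilde x(\bar x)$. I would interpret the hypothesis as including the assumption that $H$ is $C^1$ near $\bar x_0$.

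If only pointwise differentiability at $\bar x_0$ were available, I would argue directly instead. Writing $H(\bar x)=H'(\bar x_0)(\bar x-\bar x_0)+o(|\bar x-\bar x_0|)$ shows that $\bar x_0$ is an isolated zero. This suffices, because uniqueness is only claimed around the fixed point $\bar x_0$, not as local injectivity at every point. Either way, there is an $r>0$ such that $\bar x_0$ is the only zero of $H$ in $[\bar x_0-r,\bar x_0+r]$.

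\emph{Step 4: conclusion.} Choose $\rho=r\sqrt T$. Let $x$ be any periodic solution with $\|x-(\bar x_0+\tilde x(\bar x_0))\|_{H^1}\le\rho$. By Step 2 its mean satisfies $|\bar x-\bar x_0|\le r$ and $\tilde x=\tilde x(\bar x)$, and by Step 1 we have $H(\bar x)=0$. Step 3 then forces $\bar x=\bar x_0$, so $x=\bar x_0+\tilde x(\bar x_0)$.

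For the PDE solution, any $u$ in the neighbourhood has complement component $w$ uniquely determined by $v=Pu$, by Proposition~\ref{prop:complement_solution}. Its kernel coordinate lies within the stated neighbourhood, since $\|Pu\|$ is controlled by $\|u\|$. Uniqueness of $u$ therefore follows from uniqueness of $x$.
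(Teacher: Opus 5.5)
Your proposal is correct and follows the paper's route: you slave the zero-mean part to the mean via the contraction $\mathcal{T}_1(\bar x,\cdot)$, reduce to the scalar map $H$, use $H'(\bar x_0)\neq0$ to make $\bar x_0$ an isolated zero, and transfer uniqueness to $u$ through Proposition~\ref{prop:complement_solution}. Your version is more careful than the paper's in three places: you note that $\tilde x(\bar x)$ is unique in all of $Z$, you convert the $H^1$-ball around $\bar x_0+\tilde x(\bar x_0)$ into the interval $|\bar x-\bar x_0|\le r$ of means by choosing $\rho=r\sqrt T$, and you observe that differentiability of $H$ at $\bar x_0$ alone isolates the zero, so the $C^1$ discussion is not needed.
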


\begin{proof}
Under the hypothesis $C_DK_R<1$, the map
\[
\tilde x\mapsto
D^{-1}
\Bigl(
G_R(\bar x+\tilde x)-\overline{G_R(\bar x+\tilde x)}
\Bigr)
\]
is a strict contraction on a sufficiently small ball in $Z$ for $\bar x$ near
$\bar x_0$. Hence, for each such $\bar x$, there exists a unique fixed point
$\tilde x(\bar x)\in Z$ solving the first equation in
\eqref{eq:mean_zero_system_local}. By standard continuous dependence of
contraction fixed points on parameters, the map
$\bar x\mapsto\tilde x(\bar x)$ is continuous.

We then consider the scalar equation
\[
H(\bar x)=0.
\]
By construction, $H$ is continuous, and assumption
\eqref{eq:H_nondegenerate} implies that $\bar x_0$ is a simple zero.
Therefore, after possibly shrinking the neighborhood of $\bar x_0$, the
equation $H(\bar x)=0$ admits a unique solution $\bar x_*$ near $\bar x_0$.
Setting
\[
x_*:=\bar x_*+\tilde x(\bar x_*)
\]
yields a unique periodic kernel solution in the corresponding $H^1$-ball.

Finally, uniqueness of the corresponding PDE solution follows from
Proposition~\ref{prop:complement_solution}, since the complement component is
uniquely determined by the kernel variable.
\end{proof}

Proposition~\ref{prop:local_unique} shows that once a nondegenerate zero of the
scalar solvability map has been identified, the corresponding periodic kernel
orbit is locally unique. This also provides the natural framework for studying
dependence on the forcing parameters.

We now regard the reduced equation as depending on the parameter pair
\[
p:=(\Gamma,\varepsilon_2)\in\mathbb{R}^2.
\]
Whenever the truncation is chosen smooth, the reduced map inherits the
corresponding regularity, and one can study the variation of the periodic
kernel solution with respect to $p$ by combining the contraction argument for
the zero-mean part with an implicit-function argument for the scalar
solvability condition.

\begin{proposition}[Lipschitz and $C^1$ dependence on parameters]
\label{prop:param_dependence}
Assume that
\[
\frac{M_R}{c_\eta}<1,
\qquad
C_DK_R<1.
\]
Let $x_0\in H^1_{\mathrm{per}}(0,T)$ be a reference periodic solution of the
kernel equation \eqref{eq:kernel_eq} corresponding to a parameter value
$p_0=(\Gamma_0,\varepsilon_{2,0})$.
Assume further that the associated scalar solvability map is nondegenerate at
$x_0$ in the sense of \eqref{eq:H_nondegenerate}.

Then there exist neighborhoods $\mathcal{U}\subset\mathbb{R}^2$ of $p_0$ and
$\mathcal{V}\subset H^1_{\mathrm{per}}(0,T)$ of $x_0$ such that:

\begin{enumerate}
\item For each $p=(\Gamma,\varepsilon_2)\in\mathcal{U}$, there exists a unique
solution
\[
x(p)\in\mathcal{V}
\]
of the kernel equation \eqref{eq:kernel_eq}.

\item The map
\[
p\mapsto x(p)
\]
is Lipschitz continuous from $\mathcal{U}$ into
$H^1_{\mathrm{per}}(0,T)$.

\item If, in addition, the reduced forcing $G_R$ is Fr\'echet differentiable
with respect to $x$ and continuously differentiable with respect to both $x$
and $p$ on $\mathcal{V}\times\mathcal{U}$, then the map
\[
p\mapsto x(p)
\]
is of class $C^1$.
\end{enumerate}

Moreover, the reconstructed PDE solution
\[
u(p)(t)=x(p)(t)w_0+w(x(p)(t)w_0)
\]
inherits the same regularity as a map into
$H^1(\mathbb{S}_T;Y)\cap L^2(\mathbb{S}_T;X)$.
\end{proposition}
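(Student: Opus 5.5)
We prove Proposition~\ref{prop:param_dependence} by treating the mean/zero-mean system \eqref{eq:mean_zero_system_local} as a parameter-dependent problem in the unknowns $(\bar x,\tilde x)\in\mathbb{R}\times Z$, with $p=(\Gamma,\varepsilon_2)$ as the parameter. We write $G_R(x;p)$ to stress the dependence on $p$. The structure has two layers. First, a uniform contraction principle handles the zero-mean part. Second, a scalar implicit-function (or strict monotonicity) argument handles the mean.

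\textbf{Step 1: $p$-dependence of the ingredients.} We first show that $G_R$ is Lipschitz in $p$, locally uniformly in $x$. In $f(u,t;p)=-\lambda_p(t,x)\,u(u^2-1)-\Gamma$ the parameters enter affinely through $\lambda_p=1+\varepsilon_2\sin(kx-\omega t)$ and through $\Gamma$. Hence
\[
\|f_R(u,t;p)-f_R(u,t;p')\|_Y\le C(R)\,|p-p'|
\]
whenever $\|u\|_X\le 2R$; the bound is trivial when $\|u\|_X>2R$, since then $f_R=0$. The same argument applies to the fixed point $w(v;p)$ of $\Phi_v$. The contraction constant $M_R/c_\eta$ is uniform for $p$ in a bounded neighbourhood, because $M_R$ depends only on $\|\lambda\|_{L^\infty}$ and $|\Gamma|$. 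The standard estimate for fixed points of uniform contractions then gives
\[
\|w(v;p)-w(v;p')\|\le \frac{C}{c_\eta-M_R}\,|p-p'|.
\]
Combining this with Lemma~\ref{lem:G_lipschitz}, we obtain
\[
\|G_R(x;p)-G_R(x';p')\|_{L^2}\le K_R\|x-x'\|_{L^2}+C'|p-p'|.
\]

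\textbf{Step 2: zero-mean part and the scalar map.} By \eqref{eq:T1_contraction}, $\mathcal{T}_1(\bar x,\cdot\,;p)$ is a contraction with constant $C_DK_R<1$, uniformly in $(\bar x,p)$. The uniform contraction principle therefore yields a unique $\tilde x(\bar x,p)\in Z$ near $\tilde x(\bar x_0,p_0)$. This map is jointly Lipschitz in $(\bar x,p)$, with constant controlled by $C_D(K_R+C')/(1-C_DK_R)$. As a consequence,
\[
H(\bar x,p):=\overline{G_R(\bar x+\tilde x(\bar x,p);p)}
\]
is Lipschitz in $(\bar x,p)$.

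\textbf{Step 3: solving $H=0$.} This is the main obstacle, because in parts (1)--(2) we have only Lipschitz regularity and no differentiability in $p$. The hypothesis $H'(\bar x_0)\ne0$ refers to differentiability in $\bar x$ at $p_0$, and we must upgrade it to a uniform statement. We first try to show that, after shrinking the neighbourhoods, $\bar x\mapsto H(\bar x,p)$ is strictly monotone with slope bounded below by $|H'(\bar x_0)|/2$, uniformly in $p\in\mathcal U$. If $G_R$ is only Lipschitz, this may require assuming $H(\cdot,p)$ is $C^1$ with derivative continuous in $p$. Alternatively, we use the Clarke implicit function theorem, assuming the generalized derivative $\partial_{\bar x}H$ excludes $0$.

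Granting the uniform monotonicity, the intermediate value theorem gives a unique zero $\bar x(p)$ near $\bar x_0$. The estimate
\[
|\bar x(p)-\bar x(p')|\le \frac{2}{|H'(\bar x_0)|}\,|H(\bar x(p'),p)-H(\bar x(p'),p')|\le C|p-p'|
\]
gives Lipschitz dependence. Setting $x(p)=\bar x(p)+\tilde x(\bar x(p),p)$ then proves (1) and (2). Uniqueness in $\mathcal V$ follows as in Proposition~\ref{prop:local_unique}: any solution in $\mathcal V$ has mean near $\bar x_0$, its zero-mean part must be the contraction fixed point, and its mean must be the unique zero of $H$.

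\textbf{Step 4: $C^1$ case and reconstruction.} Under the extra $C^1$ hypothesis of (3), the map $\mathcal{F}(\tilde x,\bar x,p)=\tilde x-\mathcal{T}_1$ has partial derivative $I-D_{\tilde x}\mathcal{T}_1$ in $\tilde x$, which is invertible because its norm satisfies $\|D_{\tilde x}\mathcal T_1\|\le C_DK_R<1$. The Banach-space implicit function theorem makes $\tilde x(\bar x,p)$ of class $C^1$, so $H$ is $C^1$. The scalar implicit function theorem, using $\partial_{\bar x}H(\bar x_0,p_0)\ne0$, then makes $\bar x(p)$ of class $C^1$, and hence $x(p)$ is $C^1$.

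Finally, the reconstructed solution $u(p)=x(p)w_0+w(x(p)w_0;p)$ inherits Lipschitz (respectively $C^1$) dependence on $p$. This follows from the joint Lipschitz (respectively $C^1$) dependence of $w(v;p)$ on $(v,p)$ obtained in Step 1, or from the implicit function theorem applied to \eqref{eq:fixed_point_complement}, composed with $p\mapsto x(p)$.
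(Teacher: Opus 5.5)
Your architecture is the paper's own: a uniform contraction for the zero-mean part $\tilde x(\bar x;p)$, followed by the scalar equation $H(\bar x;p)=0$. Your Step~1 (affine dependence of $f$ on $p$, and the uniform-contraction estimate for $w(v;p)$) and your Step~4 (the Banach-space implicit function theorem with $\|D_{\tilde x}\mathcal{T}_1\|\le C_DK_R<1$) spell out what the paper only asserts. Under the extra hypothesis of item (3) your argument covers all three items, because continuity of $\partial_{\bar x}H$ at $(\bar x_0,p_0)$ gives exactly the slope bound $|\partial_{\bar x}H|\ge|H'(\bar x_0)|/2$ that Step~3 needs.

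The genuine gap is Step~3 for items (1)--(2) under the stated hypotheses. There you \emph{grant} uniform monotonicity, and it does not follow from joint Lipschitz continuity of $H$ together with $H'(\bar x_0)\neq0$ at the single point $(\bar x_0,p_0)$. For example, take $h(s)=s+2s^2\sin(1/s)$ with $h(0)=0$. It is Lipschitz near $0$ and $h'(0)=1$, yet $h'(1/(2\pi k))=-1$ for every $k$, so $h$ is injective on no interval around $0$. Hence $H(s,p):=h(s)-p$ has two zeros in any prescribed neighbourhood of $0$ for suitable $p$ arbitrarily close to $0$, and no $\mathcal U,\mathcal V$ as in (1) exist.

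The paper's proof has the same hole. It invokes the implicit function theorem for $H(\bar x;p)=0$, which tacitly needs $\partial_{\bar x}H$ to exist and be continuous near $(\bar x_0,p_0)$. So you have located the weak point correctly but not closed it. To close it you must commit to one route:
\begin{enumerate}
\item Replace the pointwise condition by Clarke's condition: the $\bar x$-projection of the generalized gradient of $H$ at $(\bar x_0,p_0)$ excludes $0$. This yields precisely your slope bound and a Lipschitz branch.
\item Prove the $C^1$ hypothesis of (3) and deduce (1)--(2) from Step~4. This is plausible here: $\chi(\|u\|_X/R)\equiv1$ near $u=0$, so the cut-off is smooth; the cubic term is smooth on the algebra $H^1(\mathbb{T}_L)$; and $p$ enters affinely. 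It must, however, be done in a space with time regularity such as $C(\mathbb{S}_T;X)$, since superposition operators on $L^2$ in time are in general not Fr\'echet differentiable.
\end{enumerate}

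A smaller point, which the paper also leaves implicit: Step~1 controls $w(v;p)$ only in $L^2_T(Y)$. The asserted regularity of $u(p)$ in $H^1(\mathbb{S}_T;Y)\cap L^2(\mathbb{S}_T;X)$ therefore requires running the contraction in that stronger norm. This uses that $\mathcal{L}^{-1}$ maps $L^2_T(Y)$ boundedly into that space, since the ratios $|n|/|\Delta_{n,j}|$ and $\lambda_j^{1/2}/|\Delta_{n,j}|$ are bounded.
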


\begin{proof}
We again use the mean/zero-mean formulation
\eqref{eq:mean_zero_system_local}, now viewed as parameter-dependent. For each
$p=(\Gamma,\varepsilon_2)$, define
\begin{equation}
\mathcal{T}_1(\bar x,\tilde x;p)
:=
D^{-1}\Bigl(
G_R(\bar x+\tilde x;p)-\overline{G_R(\bar x+\tilde x;p)}
\Bigr),
\label{eq:T1_param}
\end{equation}
and
\begin{equation}
\mathcal{T}_2(\bar x,\tilde x;p)
:=
\overline{G_R(\bar x+\tilde x;p)}.
\label{eq:T2_param}
\end{equation}

Since $C_DK_R<1$, the map
$\tilde x\mapsto\mathcal{T}_1(\bar x,\tilde x;p)$ remains a strict contraction
for $(\bar x,p)$ in a sufficiently small neighborhood of the reference point.
Hence, by the contraction mapping principle, for each such $(\bar x,p)$ there
exists a unique fixed point
\[
\tilde x=\tilde x(\bar x;p)\in Z,
\]
and this fixed point depends Lipschitz continuously on $(\bar x,p)$. If $G_R$
is continuously Fr\'echet differentiable, then the standard differentiable
dependence theorem for contraction fixed points implies that
$(\bar x,p)\mapsto\tilde x(\bar x;p)$ is of class $C^1$.

Substituting this fixed point into the scalar solvability condition yields the
reduced scalar map
\begin{equation}
H(\bar x;p):=
\overline{G_R(\bar x+\tilde x(\bar x;p);p)}.
\label{eq:H_param}
\end{equation}
By the nondegeneracy hypothesis at the reference solution,
\[
\partial_{\bar x}H(\bar x_0;p_0)\neq0.
\]
Therefore the implicit function theorem applies to the scalar equation
$H(\bar x;p)=0$, yielding a unique branch
\[
\bar x=\bar x(p)
\]
for $p$ in a neighborhood of $p_0$. The corresponding periodic kernel solution
is then
\[
x(p):=\bar x(p)+\tilde x(\bar x(p);p).
\]
The Lipschitz or $C^1$ dependence of $x(p)$ follows from the corresponding
regularity of $\bar x(p)$ and $\tilde x(\bar x;p)$.

Finally, the reconstructed PDE solution inherits the same regularity because
the complement map is defined by a contraction fixed point and depends with the
same regularity on its argument in the present small-data regime.
\end{proof}

A particularly important case is obtained by considering the homogeneous
equilibria of the $\phi^4$ potential, namely $u\equiv\pm1$. In terms of the
kernel variable $x$, these correspond to
\[
x_0=\pm\sqrt{L},
\]
since $u=xw_0$ and $w_0=L^{-1/2}$. Near these reference states, the scalar
solvability map is a perturbation of the unforced reduced scalar map, and the
required nondegeneracy can be verified explicitly.

\begin{proposition}[Local uniqueness near the homogeneous equilibria]
\label{prop:local_uniqueness_pm1}
Assume that
\[
\frac{M_R}{c_\eta}<1,
\qquad
C_DK_R<1.
\]
Fix
\[
x_0\in\{+\sqrt{L},-\sqrt{L}\},
\]
and let $p=(\Gamma,\varepsilon_2)\in\mathbb{R}^2$.
Assume that the cut-off $\chi$ is chosen smooth so that $f_R(\cdot,t)$ is
$C^1$ as a map from $X$ to $Y$ for each $t$.

Then there exist $\rho>0$ and $\delta>0$ such that, for all
\[
|p|:=|\Gamma|+|\varepsilon_2|\le\delta,
\]
the reduced kernel equation \eqref{eq:kernel_eq} admits a unique
$T$-periodic solution
\[
x(p)\in H^1_{\mathrm{per}}(0,T)
\]
satisfying
\[
\|x(p)-x_0\|_{H^1(0,T)}\le \rho.
\]
Consequently, the reconstructed PDE solution
\[
u(p)(t)=x(p)(t)w_0+w(x(p)(t)w_0)(t)
\]
is unique in the corresponding neighborhood of
$H^1(\mathbb{S}_T;Y)\cap L^2(\mathbb{S}_T;X)$.
\end{proposition}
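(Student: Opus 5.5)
The plan is to reduce the statement to Proposition~\ref{prop:param_dependence} with reference data $p_0=(0,0)$ and $x_0\equiv\pm\sqrt{L}$. The one genuinely new ingredient is the nondegeneracy hypothesis \eqref{eq:H_nondegenerate}, and I would verify it explicitly.

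\textbf{Step 1: the unforced reference solution.} At $p_0=0$ we have $\lambda\equiv1$ and $\Gamma=0$. The constant function $u_0\equiv\pm1$, i.e.\ $u_0=x_0w_0$, solves the original equation, since $W'(\pm1)=0$ and $Au_0=0$. We choose $R>\|u_0\|_X=\sqrt{L}$, so that $\chi(\|u_0\|_X/R)=1$ on a neighbourhood of $u_0$ and $f_R=f$ there. Then $Qf_R(u_0,t)=0$, so $w(x_0w_0)=0$ by the uniqueness part of Proposition~\ref{prop:complement_solution}. Consequently $\tilde x(x_0;p_0)=0$ and $H(x_0;p_0)=0$. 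Thus $x_0$ is a periodic solution of \eqref{eq:kernel_eq} at $p_0$.

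\textbf{Step 2: computing $H'(x_0)$.} I would differentiate $H(\bar x;p_0)=\overline{G_R(\bar x+\tilde x(\bar x))}$ at $\bar x=x_0$. The smoothness of $\chi$ and the $C^1$ assumption on $f_R$ give $C^1$ dependence of the fixed points $w(\cdot)$ and $\tilde x(\cdot)$, by the differentiable contraction principle already used in Proposition~\ref{prop:param_dependence}. Near $u_0$ the derivative is
\[
D_uf(u_0)h=-W''(\pm1)h=-2h .
\]
This operator is a constant multiple of the identity, so it commutes with $P$ and $Q$. Differentiating the complement fixed point equation at $u_0$ in the direction $w_0$ gives
\[
\mathcal{L}\,\partial w=Q(-2)(w_0+\partial w)=-2\,\partial w ,
\]
which forces $\partial w=0$ because $\mathcal{L}+2$ is injective on $\ker A^\perp$: its multipliers $\Delta_{n,j}+2$ have real part $\lambda_j+2-(2\pi n/T)^2$ and imaginary part $\eta\,2\pi n/T\neq0$ for $n\neq0$, and equal $\lambda_j+2>0$ for $n=0$. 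By the same reasoning, the derivative of the zero-mean part of $G_R$ in the direction of a constant vanishes, so $\partial_{\bar x}\tilde x=0$. Hence
\[
H'(x_0)=\frac{d}{d\bar x}\Big(\bar x-\frac{\bar x^3}{L}\Big)\Big|_{\bar x=x_0}=1-3=-2\neq0 .
\]
This is the $\pm\sqrt{L}$ analogue of the computation of $H_0$ in Lemma~\ref{lem:kernel_periodic}.

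\textbf{Step 3: the conclusion.} With Steps 1 and 2 in hand, Proposition~\ref{prop:param_dependence} (equivalently, the implicit function theorem applied to $H(\bar x;p)=0$) produces neighbourhoods $\mathcal U\ni 0$ and $\mathcal V\ni x_0$ and a unique solution $x(p)\in\mathcal V$ for each $p\in\mathcal U$. Taking $\delta$ and $\rho$ so that $\{|p|\le\delta\}\subset\mathcal U$ and the $H^1$-ball of radius $\rho$ about $x_0$ lies in $\mathcal V$ gives the claim. The uniqueness of $u(p)$ then follows from the uniqueness of the complement component in Proposition~\ref{prop:complement_solution}.

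\textbf{Main obstacle.} The hardest step is justifying the $C^1$ regularity of $p\mapsto G_R(\cdot;p)$ and of the slaved maps near $(x_0,p_0)$. The delicate point is that the contraction of Proposition~\ref{prop:complement_solution} is set in $L^2_T(Y)$, while $f_R$ is only $C^1$ from $X$ to $Y$. I would first try to bootstrap $w$ into $L^2_T(X)$ using the gain $|\Delta_{n,j}|\gtrsim\lambda_j$ for large $j$. I would then check directly that the dependence on $p$ is $C^1$, which is plausible because $\Gamma$ enters additively and $\varepsilon_2$ enters linearly in $f$.
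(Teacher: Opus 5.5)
Your proposal follows the paper's route: both arguments show nondegeneracy of the scalar solvability map at $x_0=\pm\sqrt{L}$, with $\partial_{\bar x}H(x_0;0)=1-3x_0^2/L=-2$, and then conclude by an implicit-function argument (the paper phrases this as strict monotonicity of $H(\cdot;p)$). Your version is in fact more careful: you verify $w(x_0w_0)=0$ and $\tilde x(x_0)=0$, you show via injectivity of $\mathcal{L}+2$ and $D+2$ that the slaved terms do not contribute to $\partial_{\bar x}H(x_0;0)$ (the paper only asserts this), and you make explicit the tacit requirement $R>\sqrt{L}$, which is genuinely needed, since for $R<\sqrt{L}/2$ one has $f_R\equiv0$ near $u_0\equiv\pm1$, so every nearby constant solves the reduced equation and uniqueness fails.

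The $C^1$ issue you single out is shared with the paper, which does not resolve it. Two corrections to your proposed fix. First, the gain uniform in $n$ is only $|\Delta_{n,j}|\gtrsim\sqrt{\lambda_j}$, because of near-resonances $(2\pi n/T)^2\approx\lambda_j$; this still suffices for $H^1$ regularity in space. Second, $u\mapsto f_R(u(\cdot),\cdot)$ is not Fr\'echet differentiable from $L^2(\mathbb{S}_T;X)$ to $L^2(\mathbb{S}_T;Y)$ unless $f_R(\cdot,t)$ is affine, so the complement problem must be closed in a space continuous in time, such as $H^1(\mathbb{S}_T;X)$.
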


\begin{proof}
We again use the mean/zero-mean formulation
\eqref{eq:mean_zero_system_local}. For each pair $(\bar x,\tilde x)$ and
parameter $p$, define
\[
\mathcal{T}_1(\bar x,\tilde x;p)
=
D^{-1}\Bigl(
G_R(\bar x+\tilde x;p)-\overline{G_R(\bar x+\tilde x;p)}
\Bigr),
\]
and
\[
\mathcal{T}_2(\bar x,\tilde x;p)
=
\overline{G_R(\bar x+\tilde x;p)}.
\]
As before, the first equation determines a unique zero-mean correction
\[
\tilde x=\tilde x(\bar x;p)
\]
for $\bar x$ near $x_0$ and $|p|$ sufficiently small.

We then consider the scalar solvability map
\[
H(\bar x;p)
:=
\overline{G_R(\bar x+\tilde x(\bar x;p);p)}.
\]
At zero forcing, that is, for $p=0$, the reduced map corresponding to the pure
kernel ansatz is
\[
H_0(\bar x)=\bar x-\frac{\bar x^3}{L}.
\]
Differentiating gives
\[
H_0'(\bar x)=1-\frac{3\bar x^2}{L},
\]
and therefore
\[
H_0'(x_0)=1-3=-2\neq0
\qquad \text{for } x_0=\pm\sqrt{L}.
\]
Thus the unforced scalar reduced map is nondegenerate at both homogeneous
equilibria.

By construction of the Lyapunov--Schmidt reduction, the full reduced map
$H(\bar x;p)$ is a perturbation of $H_0(\bar x)$ for $|\bar x-x_0|$ and $|p|$
sufficiently small. More precisely, the forcing terms and the contribution of
the slaved complement $w(xw_0)$ vanish at the reference configuration and vary
continuously with $(\bar x,p)$ in the present small-data regime. Consequently,
after shrinking $\rho$ and $\delta$ if necessary, we may ensure that
\[
|\partial_{\bar x}H(\bar x;p)-H_0'(x_0)|\le 1
\]
for all $|\bar x-x_0|\le\rho$ and $|p|\le\delta$. Since $H_0'(x_0)=-2$, it
follows that
\[
\partial_{\bar x}H(\bar x;p)\le -1
\]
throughout this neighborhood. In particular, $\bar x\mapsto H(\bar x;p)$ is
strictly monotone there.

Because $H(x_0;0)=0$, continuity of $H$ implies that, for sufficiently small
$p$, the scalar equation $H(\bar x;p)=0$ admits a unique solution
$\bar x(p)$ in the interval $|\bar x-x_0|\le\rho$. Defining
\[
x(p):=\bar x(p)+\tilde x(\bar x(p);p)
\]
yields a unique $T$-periodic kernel solution in the claimed neighborhood.
Uniqueness of the reconstructed PDE solution again follows from the uniqueness
of the complement component established in
Proposition~\ref{prop:complement_solution}.
\end{proof}

\begin{corollary}[$C^1$ dependence near the homogeneous equilibria]
\label{cor:C1_dependence}
Under the hypotheses of Proposition~\ref{prop:local_uniqueness_pm1}, the
locally unique kernel solution
\[
p\mapsto x(p)
\]
is of class $C^1$ from a neighborhood of $p=0$ in $\mathbb{R}^2$ into
$H^1_{\mathrm{per}}(0,T)$.
Moreover, the reconstructed PDE solution
\[
p\mapsto u(p)=x(p)w_0+w(x(p)w_0)
\]
is of class $C^1$ as a map into
$H^1(\mathbb{S}_T;Y)\cap L^2(\mathbb{S}_T;X)$.
\end{corollary}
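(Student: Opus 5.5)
The plan is to run the implicit function theorem twice, once for the zero-mean fixed point and once for the scalar solvability condition, using the $C^1$ hypothesis on $f_R$ stated in Proposition~\ref{prop:local_uniqueness_pm1}. The structure mirrors part (3) of Proposition~\ref{prop:param_dependence}; the new work is checking its differentiability hypothesis near $x_0=\pm\sqrt{L}$, where Proposition~\ref{prop:local_uniqueness_pm1} already supplies nondegeneracy.

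\textbf{Step 1: $C^1$ regularity of the complement map.} I would first show that $v\mapsto w(v)$ is $C^1$ from $L^2(\mathbb{S}_T;\ker A)$ into $L^2(\mathbb{S}_T;\ker A^\perp)$, with joint $C^1$ dependence on $p=(\Gamma,\varepsilon_2)$.

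The parameter dependence is simple: $f$ depends affinely on $\Gamma$ and $\varepsilon_2$, and the cut-off factor does not involve $p$, so $f_R$ is $C^1$ in $(u,p)$.

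Set $\mathcal{F}(w,v,p):=w-\mathcal{L}^{-1}Qf_R(v+w,t;p)$. Its $w$-derivative is $I-\mathcal{L}^{-1}QD_uf_R$. Its norm is at least $1-M_R/c_\eta>0$, by Lemma~\ref{lem:L_inverse} and Proposition~\ref{prop:Lip_fR}, so it is invertible. The implicit function theorem then gives a $C^1$ branch, which by uniqueness coincides with the contraction fixed point.

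\textbf{Main obstacle.} The key difficulty is that the Nemytskii operator $u\mapsto f_R(u(\cdot),\cdot)$ is differentiable on $L^2_T(X)$, not on $L^2_T(Y)$, because $f_R$ is only Lipschitz $X\to Y$.

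To handle this, I would first upgrade the contraction to $L^2_T(X)$. The multiplier bound gives $|\Delta_{n,j}|\gtrsim 1+\lambda_j^{1/2}$ away from small frequencies, so $\mathcal{L}^{-1}$ maps $L^2_T(Y)$ into $L^2_T(X)$ boundedly. Redoing Section~\ref{sec4} in $L^2_T(X)$, with the constant adjusted, makes the map $C^1$ there.

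\textbf{Step 2: $C^1$ reduced forcing.} Next I would conclude that $G_R(x;p)$ is $C^1$ from $H^1_{\mathrm{per}}(0,T)\times\mathbb{R}^2$ into $L^2(0,T)$. It is a composition of $C^1$ maps, and projection onto $w_0$ is a bounded linear map.

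\textbf{Step 3: $C^1$ zero-mean correction.} The zero-mean correction $\tilde x(\bar x;p)$ solves $\tilde x=\mathcal{T}_1(\bar x,\tilde x;p)$. Here $I-D_{\tilde x}\mathcal{T}_1$ is invertible, since $C_DK_R<1$ by Lemma~\ref{lem:D_inverse}. The implicit function theorem therefore makes $(\bar x,p)\mapsto\tilde x(\bar x;p)$ of class $C^1$ into $Z$.

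It follows that $H(\bar x;p)$ is $C^1$.

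\textbf{Step 4: the scalar condition.} The proof of Proposition~\ref{prop:local_uniqueness_pm1} gives $\partial_{\bar x}H\le-1$ near $(x_0,0)$. The scalar implicit function theorem then yields a $C^1$ branch $\bar x(p)$ with $H(\bar x(p);p)=0$.

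By the local uniqueness in Proposition~\ref{prop:local_uniqueness_pm1}, this branch is the solution $\bar x(p)$ found there. Hence $x(p)=\bar x(p)+\tilde x(\bar x(p);p)$ is $C^1$ into $H^1_{\mathrm{per}}(0,T)$.

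\textbf{Step 5: the PDE solution.} Finally, $u(p)=x(p)w_0+w(x(p)w_0;p)$ is $C^1$ into $L^2(\mathbb{S}_T;X)$, by the chain rule and Step 1.

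For the $H^1(\mathbb{S}_T;Y)$ component I would use the multiplier bound $|\Delta_{n,j}|\ge\eta\,2\pi|n|/T$. It shows that $\partial_t\mathcal{L}^{-1}$ is bounded on $L^2_T(Y)$ restricted to $\ker A^\perp$. Differentiating the fixed-point identity in $p$ then transfers $C^1$ regularity to this time-derivative norm.
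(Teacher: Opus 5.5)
Your overall route is the paper's. The paper proves the corollary in one line, by invoking part (3) of Proposition~\ref{prop:param_dependence} together with the nondegeneracy $H_0'(\pm\sqrt{L})=-2$ from Proposition~\ref{prop:local_uniqueness_pm1}, and never checks that $G_R$ is actually $C^1$. You correctly locate the real work there, but your verification fails at Step~1.

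\textbf{The gap.} The superposition operator $w\mapsto f_R(v+w,\cdot)$ is \emph{not} Fr\'echet differentiable from $L^2(\mathbb{S}_T;X)$ into $L^2(\mathbb{S}_T;Y)$. To see this, take the reference state $u_0\equiv\pm1$ at $p=0$, where $f_R$ does not depend on $t$. Perturb by $h=z\mathbf{1}_E$ with $z\in X$ fixed and $|E|\to0$. Then $\|h\|_{L^2_T(X)}=|E|^{1/2}\|z\|_X$. The remainder $f_R(u_0+h)-f_R(u_0)-D_uf_R(u_0)h$ has $L^2_T(Y)$-norm $|E|^{1/2}\|f_R(u_0+z)-f_R(u_0)-D_uf_R(u_0)z\|_Y$. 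So the quotient does not tend to zero unless $f_R$ is affine; this is Krasnosel'skii's classical obstruction for Nemytskii operators between $L^p$ spaces with equal exponents. You only get G\^ateaux differentiability, which is not enough for the implicit function theorem, and Steps~2--5 inherit the defect. Your ``main obstacle'' is therefore misdiagnosed: the problem is $L^2$-integrability in time, not $X$ versus $Y$ in space.

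\textbf{The repair.} Replace $L^2(\mathbb{S}_T;X)$ by $C(\mathbb{S}_T;X)$.
\begin{enumerate}
\item Since $\eta>0$ and $A\ge\lambda_1>0$ on $\ker A^\perp$, the damped wave group is exponentially stable in the energy space there. Hence $\mathcal{L}^{-1}$ maps $\{h\in L^2(\mathbb{S}_T;Y):Ph=0\}$ boundedly into $C(\mathbb{S}_T;X)\cap C^1(\mathbb{S}_T;Y)$. Your multiplier bound $|\Delta_{n,j}|\gtrsim\lambda_j^{1/2}$ only yields $L^2(\mathbb{S}_T;X)$.
\item Superposition by $f_R$, which is $C^1$ in $(u,p)$ and continuous in $t$, is $C^1$ from $C(\mathbb{S}_T;X)$ into $C(\mathbb{S}_T;Y)\subset L^2(\mathbb{S}_T;Y)$.
\item The embedding $H^1_{\mathrm{per}}(0,T)\hookrightarrow C([0,T])$ places $v=xw_0$ in the same framework.
\end{enumerate}

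\textbf{A second issue.} ``Redoing Section~\ref{sec4} with the constant adjusted'' imposes a new smallness condition: $M_R$ times the norm of $\mathcal{L}^{-1}$ into the new space must be less than $1$. The corollary's hypotheses $M_R/c_\eta<1$ and $C_DK_R<1$ do not provide this. You do not need it, because the implicit function theorem only requires invertibility of $D_w\mathcal{F}$ at the reference point. Take $R>\sqrt{L}$, so the cut-off is inactive near $u\equiv\pm1$. Then $D_uf_R=-2I$ there, hence $D_w\mathcal{F}=\mathcal{L}^{-1}(\mathcal{L}+2)$ on $\ker A^\perp$. Its multipliers $\lambda_j+2-\nu_n^2+i\eta\nu_n$, with $\nu_n=2\pi n/T$, are bounded away from zero. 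Identify the resulting local branch with the contraction fixed point by uniqueness. Steps~3--5 then go through as you describe.
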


\begin{proof}
The conclusion follows from Proposition~\ref{prop:param_dependence} together
with Proposition~\ref{prop:local_uniqueness_pm1}, which provides the required
nondegeneracy near the reference states $x_0=\pm\sqrt{L}$.
\end{proof}
\section{Relation to finite-interval numerics and to collective-coordinate kink studies}
\label{sec6}

The main theorem established above provides a rigorous existence result, at the
level of the full partial differential equation, for time-periodic solutions of
a driven--damped $\phi^4$ model posed on a bounded spatial domain with periodic
boundary conditions. It is therefore natural to relate this result both to
finite-interval numerical simulations and to collective-coordinate descriptions
of kink dynamics
\cite{GatlikDobrowolskiCaputoKevrekidis2026,GatlikDobrowolskiKevrekidis2024},
since these two settings provide much of the practical and conceptual
motivation for the present analysis.

We begin with the numerical perspective. In computations, a field equation
posed on the whole line $\mathbb{R}$ is necessarily replaced by a problem on a
finite spatial interval, supplemented with boundary conditions. From an
analytical viewpoint, the decisive effect of this replacement is not merely
computational convenience, but the passage from continuous to discrete spatial
spectrum. On $\mathbb{R}$, the linearized spatial operator typically possesses
continuous spectrum, and this obstructs the kind of Fourier--spectral inversion
required in a Lyapunov--Schmidt argument. By contrast, on a bounded interval or
on the torus $\mathbb{T}_L$, the corresponding operator has compact resolvent
and hence purely discrete spectrum. This discreteness is precisely the
structural ingredient that makes it possible to decompose the dynamics into
kernel and complement and to invert the time-periodic linear operator mode by
mode. In this sense, the theorem proved here gives a rigorous explanation of
why time-periodic responses of driven and damped wave-type equations are
analytically accessible on bounded domains and why finite-interval numerical
simulations naturally fall into a regime compatible with such an
operator-theoretic treatment.

The role of the damping term is equally important. In the conservative setting,
the Fourier multipliers associated with the time-periodic linearized equation
may approach zero, and one is then forced to impose arithmetic nonresonance
conditions in order to control small divisors, as in the abstract framework of
Fe\v{c}kan \cite{Feckan1998}. In the present problem, however, the damping term
$\eta u_t$ introduces an imaginary part into the temporal Fourier multipliers.
As shown in Section~\ref{sec3}, this yields a uniform lower bound for all
nonzero temporal harmonics. Consequently, the complement equation becomes
uniformly invertible without any diophantine or nonresonance assumptions on the
forcing frequency. From the analytic point of view, this is one of the main
structural advantages of the dissipative setting. It explains why, on bounded
domains, an existence theory for time-periodic solutions can be developed under
relatively mild smallness assumptions on the forcing parameters, and it places
the present result naturally within the general theory of damped wave equations
and dissipative evolution equations
\cite{HaleRaugel1992,ChueshovLasiecka2010}.

At the same time, it is important to state clearly what the theorem does
\emph{not} address. The result obtained here does not establish the existence of
a periodically moving single kink on the real line
\cite{KowalczykMartelMunoz2017,NakanishiSchlag2011}. A single $\phi^4$ kink
connects distinct asymptotic states as $x\to\pm\infty$ and therefore belongs to
a nontrivial topological sector that is incompatible with periodic spatial
boundary conditions. The periodic problem studied in the present paper thus
lies in a different geometric and topological setting. Its solutions should not
be interpreted as periodic analogues of isolated topological kinks on
$\mathbb{R}$.

This distinction is also reflected in the structure of the reduction. The
Lyapunov--Schmidt decomposition used here is based on the splitting
\[
Y=\ker A\oplus(\ker A)^\perp,
\]
where $A$ is the background spatial operator on the bounded periodic domain. In
the present setting, $\ker A$ is generated by the constant mode, so the reduced
equation governs the evolution of the spatial average of the field. This is
quite different from the reductions used in kink-centered
collective-coordinate theories. In those approaches one typically linearizes
around a manifold of kink states and projects onto dynamically distinguished
modes such as the translational mode and, when present, internal oscillatory
modes. The resulting reduced systems are designed to describe the motion and
deformation of a localized topological structure
\cite{KowalczykMartelMunoz2017,SofferWeinstein1999}. By contrast, the present
theorem concerns time-periodic solutions whose dominant component lies in the
low spatial modes of a bounded-domain operator rather than near a kink
manifold.

Nevertheless, the relation with collective-coordinate studies is not merely one
of contrast. The analysis developed here provides a rigorous template showing
how a finite-dimensional reduction can emerge directly from the full PDE in a
bounded setting. More precisely, once the kernel component is prescribed, the
complement equation is solved uniquely, and the higher spatial modes are
thereby slaved to the reduced scalar dynamics. Although the resulting reduced
equation is not a kink-manifold equation, it is still a genuine PDE-derived
effective equation rather than an \emph{ad hoc} truncation. In this sense, the
present work may be viewed as a rigorous bounded-domain counterpart to the more
heuristic reductions commonly used in applications.

From this perspective, the theorem gives a mathematical explanation for a
phenomenon frequently observed in simulations but rarely justified at the PDE
level: when one studies a driven and damped nonlinear field equation on a
finite interval, the emergence of time-periodic responses is not simply a
numerical artifact, but is consistent with the spectral structure of the
bounded-domain problem and with the dissipative regularization produced by
damping. The present analysis does not yet reach the real-line kink problem,
but it does identify a setting in which the existence of periodic responses can
be proved rigorously and in which the reduction from the PDE to an effective
finite-dimensional dynamics can be carried out in a controlled manner.

For these reasons, the bounded-domain theorem established here should be viewed
both as a rigorous result in its own right and as a conceptual stepping stone
toward more refined analyses. A natural next step would be to replace the
present background decomposition by one adapted to kink manifolds and to
investigate whether analogous periodic-forcing arguments can be developed in
the presence of translation modes, internal modes, radiation, and continuous
spectrum. These questions lie beyond the scope of the present paper, but the
operator-theoretic mechanism isolated here---namely, spectral discretization
together with damping-induced invertibility---suggests a natural point of
departure for such future work.
\section{Conclusion}
\label{sec_conclusions}

In this work we established the existence of time-periodic solutions for a
driven--damped $\phi^4$ wave equation posed on a bounded spatial domain with
periodic boundary conditions. The analysis was carried out entirely at the
level of the full partial differential equation, without recourse to heuristic
reductions or collective-coordinate approximations.

The proof combines several structural ingredients in a coherent way. After
introducing a truncation of the nonlinearity in order to obtain global
Lipschitz control, we performed a Lyapunov--Schmidt decomposition in a
time-periodic Hilbert space. This reduced the infinite-dimensional problem to a
coupled system consisting of a complement equation and a finite-dimensional
kernel equation. A central role is played by the presence of linear damping.
As shown in the spectral analysis of the time-periodic linear operator, the
damping term produces a uniform lower bound on the temporal Fourier
multipliers, which ensures invertibility on the complement of the kernel and
eliminates the need for nonresonance or diophantine conditions. The complement
equation was then solved by a contraction mapping argument with explicit
Lipschitz bounds, while the reduced scalar kernel equation was treated by a
mean/zero-mean decomposition and a fixed-point argument. Finally, a
period-averaged energy identity provided a quantitative a priori estimate,
allowing removal of the truncation and yielding a genuine $T$-periodic weak
solution of the original equation.

Beyond the specific model considered here, the analysis clarifies the
structural mechanisms that make periodic existence results accessible on
bounded domains. The discreteness of the spatial spectrum, together with the
damping-induced invertibility of the time-periodic operator, provides a setting
in which the Lyapunov--Schmidt procedure can be implemented without
small-divisor obstructions. In this sense, the theorem illustrates how
spectral discretization and dissipation combine to produce a robust and
quantitatively controlled existence theory for time-periodic responses of
nonlinear wave-type equations.

At the same time, the result is intrinsically tied to the bounded-domain
setting. On the real line, the presence of continuous spectrum and radiation
precludes a direct extension of the present inversion argument. In particular,
the existence of time-periodic solutions in topologically nontrivial sectors,
such as those associated with $\phi^4$ kinks, requires a different analytical
framework, typically based on modulation theory and decompositions relative to
a kink manifold. Developing such an approach for periodically driven and
damped systems remains an open and challenging problem.

Several natural extensions suggest themselves. On bounded intervals with
Dirichlet or mixed boundary conditions, the absence of a nontrivial kernel may
simplify the reduction and lead to stronger uniqueness or stability results.
Another direction is the study of bifurcation and multiplicity phenomena within
the reduced scalar equation. More broadly, a deeper understanding of the
interaction between damping, forcing, and spectral structure may help bridge
the gap between bounded-domain periodic solutions and the dynamics of
localized structures on unbounded domains.

Overall, the results presented here provide a rigorous PDE-level foundation for
time-periodic responses in driven and damped nonlinear wave equations on
bounded domains, and they offer a natural point of departure for further
analytical and applied investigations.
\bibliographystyle{amsplain}
\bibliography{f4_references_JDE}

@article{Birnir1994,
  author  = {Birnir, Bj{\"o}rn},
  title   = {The global attractor of the damped sine--Gordon equation},
  journal = {Comm. Math. Phys.},
  volume  = {162},
  year    = {1994},
  pages   = {539--590}
}

@article{CampbellSchonfeldWingate1983,
  author  = {Campbell, D. K. and Schonfeld, J. F. and Wingate, C. A.},
  title   = {Resonance structure in kink--antikink interactions in $\phi^4$ theory},
  journal = {Physica D},
  volume  = {9},
  year    = {1983},
  pages   = {1--32}
}

@book{ChueshovLasiecka2010,
  author    = {Chueshov, Igor and Lasiecka, Irena},
  title     = {Long-Time Behavior of Second Order Evolution Equations with Nonlinear Damping},
  series    = {Mem. Amer. Math. Soc.},
  number    = {914},
  year      = {2010},
  publisher = {American Mathematical Society},
  address   = {Providence, RI}
}

@article{Feckan1998,
  author  = {Fe{\v{c}}kan, Michal},
  title   = {Periodic oscillations of abstract wave equations},
  journal = {J. Dyn. Differ. Equ.},
  volume  = {10},
  number  = {4},
  year    = {1998},
  pages   = {605--617}
}

@article{HaleRaugel1992,
  author  = {Hale, Jack K. and Raugel, Genevi{\`e}ve},
  title   = {A damped hyperbolic equation on thin domains},
  journal = {Trans. Amer. Math. Soc.},
  volume  = {329},
  year    = {1992},
  pages   = {185--219}
}

@book{KapitulaPromislow2013,
  author    = {Kapitula, Todd and Promislow, Keith},
  title     = {Spectral and Dynamical Stability of Nonlinear Waves},
  series    = {Applied Mathematical Sciences},
  volume    = {185},
  publisher = {Springer},
  address   = {New York},
  year      = {2013}
}

@article{KivsharMalomed1989,
  author  = {Kivshar, Yuri S. and Malomed, Boris A.},
  title   = {Dynamics of solitons in nearly integrable systems},
  journal = {Rev. Mod. Phys.},
  volume  = {61},
  year    = {1989},
  pages   = {763--915}
}

@article{KowalczykMartelMunoz2017,
  author  = {Kowalczyk, Micha{\l} and Martel, Yvan and Mu{\~n}oz, Claudio},
  title   = {Kink dynamics in nonlinear {Klein--Gordon} equations},
  journal = {J. Amer. Math. Soc.},
  volume  = {30},
  year    = {2017},
  pages   = {769--798}
}

@article{GatlikDobrowolskiKevrekidis2024,
  author  = {Gatlik, J. and Dobrowolski, T. and Kevrekidis, P. G.},
  title   = {Effective description of the impact of inhomogeneities on the movement of the kink front in $2+1$ dimensions},
  journal = {Phys. Rev. E},
  volume  = {109},
  year    = {2024},
  pages   = {024205}
}

@misc{GatlikDobrowolskiCaputoKevrekidis2026,
  author       = {Gatlik, J. and Dobrowolski, T. and Caputo, Jean-Guy and Kevrekidis, P. G.},
  title        = {Collective coordinate descriptions of a kink in a driven--damped $\phi^4$ model},
  year         = {2026},
  eprint       = {2601.18940},
  archivePrefix= {arXiv},
  primaryClass = {nlin.PS},
  note         = {arXiv:2601.18940}
}

@book{LionsMagenes1972,
  author    = {Lions, J.-L. and Magenes, Enrico},
  title     = {Non-Homogeneous Boundary Value Problems and Applications},
  volume    = {1},
  series    = {Die Grundlehren der mathematischen Wissenschaften},
  publisher = {Springer},
  address   = {Berlin},
  year      = {1972}
}

@article{Manton2021,
  author  = {Manton, N. S. and Ole{\'s}, K. and Romanczukiewicz, T. and Werner, A.},
  title   = {Collective coordinate model of kink--antikink collisions in $\phi^4$ theory},
  journal = {Phys. Rev. Lett.},
  volume  = {127},
  year    = {2021},
  pages   = {071601}
}

@article{NakanishiSchlag2011,
  author  = {Nakanishi, Kenji and Schlag, Wilhelm},
  title   = {Global dynamics above the ground state energy for the focusing nonlinear {Klein--Gordon} equation},
  journal = {J. Differential Equations},
  volume  = {250},
  number  = {5},
  year    = {2011},
  pages   = {2299--2333}
}

@book{Pazy1983,
  author    = {Pazy, Amnon},
  title     = {Semigroups of Linear Operators and Applications to Partial Differential Equations},
  publisher = {Springer},
  year      = {1983}
}

@article{SofferWeinstein1999,
  author  = {Soffer, Avy and Weinstein, Michael I.},
  title   = {Resonances, radiation damping and instability in Hamiltonian nonlinear wave equations},
  journal = {Invent. Math.},
  volume  = {136},
  year    = {1999},
  pages   = {9--74}
}

@book{Temam1997,
  author    = {Temam, Roger},
  title     = {Infinite-Dimensional Dynamical Systems in Mechanics and Physics},
  series    = {Applied Mathematical Sciences},
  volume    = {68},
  publisher = {Springer},
  address   = {New York},
  year      = {1997}
}

\end{document}